\DeclareFontFamily{U}{mathx}{\hyphenchar\font45}
\DeclareFontShape{U}{mathx}{m}{n}{
	<5> <6> <7> <8> <9> <10>
	<10.95> <12> <14.4> <17.28> <20.74> <24.88>
	mathx10
}{}
\DeclareSymbolFont{mathx}{U}{mathx}{m}{n}
\DeclareMathAccent{\widecheck}{0}{mathx}{"71}
\DeclareMathAccent{\wideparen}{0}{mathx}{"75}
\setlist[enumerate]{leftmargin=1.5em}
\setlist[itemize]{leftmargin=1.5em}
\newtheorem{theorem}{Theorem}[section]
\newtheorem{lemma}[theorem]{Lemma}
\newtheorem{remark}[theorem]{Remark}
\newtheorem{proposition}[theorem]{Proposition}
\numberwithin{equation}{section}
\newcommand{\norm}[1]{{\left\Vert #1 \right\Vert}}
\newcommand{\normif}[1]{{\left\Vert #1 \right\Vert}_{L^{\infty}}}
\newcommand{\normb}[1]{{\left\Vert #1 \right\Vert}_{L^2}}
\newcommand{\normd}[1]{{\left\Vert #1 \right\Vert}_{L^4}}
\newcommand{\normifr}[1]{{\left\Vert #1 \right\Vert}_{L^{\infty}\left (\mathbb{R}^2\right  )}}
\newcommand{\normbr}[1]{{\left\Vert #1 \right\Vert}_{L^2\left (\mathbb{R}^2\right  )}}
\newcommand{\normdr}[1]{{\left\Vert #1 \right\Vert}_{L^4\left (\mathbb{R}^2\right  )}}
\newcommand{\normp}[1]{{\left\Vert #1 \right\Vert}_{L^p}}
\newcommand{\normq}[1]{{\left\Vert #1 \right\Vert}_{L^q}}
\newcommand{\normqr}[1]{{\left\Vert #1 \right\Vert}_{L^q\left (\mathbb{R}^2\right  )}}
\newcommand{\normrr}[1]{{\left\Vert #1 \right\Vert}_{L^r\left (\mathbb{R}^2\right  )}}
\newcommand{\normh}[1]{{\left\Vert #1 \right\Vert}_{H^m}}
\newcommand{\normhh}[1]{{\left\Vert #1 \right\Vert}_{H^{m+1}}}
\newcommand{\normhs}[1]{{\left\Vert #1 \right\Vert}_{H^s}}
\newcommand{\RN}[1]{%
  \textup{\uppercase\expandafter{\romannumeral#1}}%
}
\title{Global well-posedness for a two-dimensional Keller-Segel-Euler system of consumption type}
\author[1]{Jungkyoung Na\thanks{Corresponding author. \newline  E-mail address: njkyoung09@snu.ac.kr}}
\begin{document}
\maketitle

\begin{abstract}
We consider the Cauchy problem for the Keller-Segel system of consumption type coupled with the incompressible Euler equations in $\mathbb{R}^2$. This coupled system describes a biological phenomenon in which aerobic bacteria living in slightly viscous fluids (such as water) move towards a higher oxygen concentration to survive. We firstly prove the local existence of smooth solutions for arbitrary smooth initial data. Then we show that these smooth solutions can be extended globally if the initial density of oxygen is sufficiently small. The main ingredient in the proof is the $W^{1,q}$-energy estimate $(q>2)$ motivated by the partially inviscid two-dimensional Boussinesq system. Our result improves the well-known global well-posedness of the two-dimensional Keller-Segel system of consumption type coupled with the incompressible Navier-Stokes equations.
\end{abstract}

\textbf{Keywords:} Keller-Segel; consumption type; Euler equations; global well-posedness

\section{Introduction}
Chemotaxis indicates the movement of biological cells or species toward a higher (or lower) concentration of some chemical substance. 
Since it describes many significant natural phenomena, its mathematical as well as biological analysis has attracted a lot of interest. Its first appearance in mathematical literature was in \cite{P53} by Patlak and in \cite{KS70, KS71} by Keller-Segel. One simplified version of their models can be written as
\begin{equation}\label{c-KS}\tag{c-KS}
    \left\{
    \begin{aligned}
    &\partial_{t} \rho = \Delta \rho - \nabla \cdot \left (\rho \nabla c\right),\\
    &\tau\partial_{t} c = \Delta c - c +\rho,
    \end{aligned}
    \right.
\end{equation}
where $\tau\ge0$ is a constant. 
Here $\rho$ and $c$ denote cell (or species) density and concentration of chemical substances, respectively. It is well known that solutions concentrate finite mass in a measure zero region within some finite time when the mass of initial data is large enough (see \cite{JL92, HV97, Benoit, W13} and references therein). The finite-time singularity formation is mainly due to the positive term $\rho$ on the right-hand side of the equation of $c$. This positive term represents the cell's contribution to producing the chemicals.

After their modeling, a variety of modified models have been proposed and analyzed to reflect natural phenomena more precisely. In particular, since chemotactic mechanism commonly occurs within fluid, Tuval et al. derived the following system based on experiments:
\begin{equation}\label{KS-N}\tag{KS-NS}
    \left\{
    \begin{aligned}
    &\partial_{t} \rho + u \cdot \nabla \rho = \nu_{\rho}\Delta \rho - \nabla \cdot \left (\chi(c)\rho \nabla c\right),\\
    &\partial_{t} c + u \cdot \nabla c = \nu_{c}\Delta c - k(c) \rho,\\
    &\partial_{t} u + u \cdot \nabla u + \nabla p = \nu_{u}\Delta u +\rho \nabla \phi,\\
    &\nabla \cdot u = 0,
    \end{aligned}
    \right.
\end{equation}
where $\nu_{\rho}, \nu_{c}, \nu_{u}>0$ represent diffusion coefficients.
Here $u$ and $p$ denote the fluid velocity and its pressure, respectively. $\chi(\cdot),k(\cdot):[0,\infty) \rightarrow [0,\infty)$ are smooth functions representing the chemotactic sensitivity and  consumption rate of chemical substances, respectively. $\phi$ is a smooth and time-independent function denoting the gravitational potential. The main difference from \eqref{c-KS} is twofold. The equation of $c$ has the negative term $-k(c)\rho$, which means the cells consume the chemical substances after movement. This kind of mechanism is called `consumption type'. 
In this consumption type, the finite-time blow-up as in \eqref{c-KS} does not happen.
Moreover, to reflect the roles of fluid, they combined $(\rho,c)$ system with the incompressible Navier-Stokes equation.

In this paper, we consider a Cauchy problem of the system coupled with the incompressible Euler equations (rather than Navier-Stokes equations):
\begin{equation}\label{KS-E}\tag{KS-E}
    \left\{
    \begin{aligned}
    &\partial_{t} \rho + u \cdot \nabla \rho = \nu_{\rho}\Delta \rho - \nabla \cdot \left (\chi(c)\rho \nabla c\right),\\
    &\partial_{t} c + u \cdot \nabla c = \nu_{c}\Delta c - k(c) \rho,\\
    &\partial_{t} u + u \cdot \nabla u + \nabla p = \rho \nabla \phi,\\
    &\nabla \cdot u = 0, \\
    &\rho(t=0)=\rho_0,\;\, c(t=0)=c_0,\;\, u(t=0)=u_0,\\
    \end{aligned}
    \right.
\end{equation}
in $\mathbb{R}^2 \times \left (0,T\right)$ for some $T>0$.
Specifically, the domains and codomains of functions in \eqref{KS-E} are as follows:
$\rho(x,t),\,c(x,t):\mathbb{R}^2 \times \left (0,T\right)\rightarrow [0,\infty)$, $u(x,t):\mathbb{R}^2 \times \left (0,T\right)\rightarrow \mathbb{R}^2$, $p(x,t):\mathbb{R}^2 \times \left (0,T\right)\rightarrow \mathbb{R}$, and $\phi:\mathbb{R}^2 \rightarrow \mathbb{R}$. The aim of this note is to show global well-posedness of \eqref{KS-E}. To effectively discuss the motivation for adopting the Euler equations and the significant meaning of our results, we firstly review some related previous results.

\subsection{Previous works}
\textbf{Fully viscous case.}
There have been a lot of studies focusing on the global regularity of \eqref{KS-N}. 
Global existence of smooth solutions in a two-dimensional bounded domain
with no-flux boundary conditions for both $\rho$ and $c$ and no-slip boundary conditions for $u$
was established for large initial data in \cite{W12} under the assumptions that
\begin{equation*}
    \left(\frac{k}{\chi}\right)'>0, \quad \left(\frac{k}{\chi}\right)''\le 0, \quad \left(\chi \cdot k\right)'\ge0
\end{equation*}
(see also \cite{W14} for stabilization of solutions to the equilibrium state under the same conditions).
This global well-posedness result was extended to the case covering slightly more general conditions of $\chi$ and $k$ in \cite{AKY21}. 
Recently, under different boundary conditions which can be considered more realistic than no-flux and no-slip conditions, the authors of \cite{WWX22} proved the global existence of smooth solutions in a two-dimensional bounded domain when initial $\rho$ and $c$ is sufficiently small and $\chi$ and $k$ are defined by $\chi(c)\equiv 1$ and $k(c)=c$.
Refer to \cite{L10} for the local existence of weak solutions in a two-dimensional bounded domain. Numerical investigations in a two-dimensional bounded domain can be found in \cite{CFKLM12}.
In the whole space $\mathbb{R}^2$, the global well-posedness of smooth solutions was obtained in \cite{CKL13} and \cite{CKL14} under different assumptions on the size of initial data, $\chi$, and $k$. \cite{CKL13} deals with any large initial data but requires $\chi$ and $k$ to satisfy
\begin{equation*}
    \chi,\,k,\,\chi',\,k'\ge0,\quad \sup|\chi-\mu k|<\epsilon\;\, \text{for some} \;\mu>0, 
\end{equation*}
while \cite{CKL14} imposes $\normif{c(t=0)}\ll 1$ but only assumes that
\begin{equation*}
    \chi,\,k,\,\chi',\,k'\ge0.
\end{equation*}
For asymptotic behaviors of smooth solutions in $\mathbb{R}^2$, refer to \cite{CKL14, CKL16, CKL18}.  See also \cite{DLM10} for the global existence of weak solutions in $\mathbb{R}^2$. 
There have been a small number of results of the three-dimensional case of \eqref{KS-N} compared with the two-dimensional one. The only available result of long-time dynamics of smooth solutions for large initial data is the eventual smoothness and stabilization of solutions in a three-dimensional bounded domain (\cite{W17}). See also \cite{TW12} for the same result of subsystem of \eqref{KS-N} obtained by taking $u\equiv 0$, $\chi\equiv 1$, and $k(c)=c$:
\begin{equation}\label{KS}\tag{KS}
    \left\{
    \begin{aligned}
    &\partial_{t} \rho = \nu_{\rho}\Delta \rho-\nabla \cdot(\rho \nabla c),\\
    &\partial_{t} c = \nu_{c}\Delta c - c \rho
    \end{aligned}
    \right.
\end{equation}
in a three-dimensional bounded domain.
For small initial data, global well-posedness of smooth solutions in $\mathbb{R}^3$ was established in \cite{DLM10} and \cite{BK22}. Furthermore, the authors of \cite{BK22} showed the existence of unique global self-similar solutions for small data in scaling invariant Besov spaces. The global existence of weak solutions in a three-dimensional bounded domain and $\mathbb{R}^3$ can be found in \cite{W16} and \cite{KLW22}, respectively.

\medskip

\noindent \textbf{Partially or fully inviscid case.}
To the best of the author's knowledge, there have been three results of long-time dynamics of partially or fully inviscid variations of \eqref{KS-N}. The paper \cite{CKL14} established the global existence of smooth solutions in $\mathbb{R}^2$ of the system obtained from \eqref{KS-N} on neglecting $\Delta c$, provided that $\normif{\rho(t=0)}\ll 1$. Recently, the authors of \cite{IJ21} showed the existence of $C^{\infty}$-data satisfying some vanishing conditions such that the corresponding solutions of the fully inviscid case of \eqref{KS} in $d$-dimensional domain with $d\ge1$ :
\begin{equation*}
    \left\{
    \begin{aligned}
    &\partial_{t} \rho = - \nabla \cdot \left (\rho \nabla c\right),\\
    &\partial_{t} c = - c \rho
    \end{aligned}
    \right.
\end{equation*}
becomes singular in finite time. 
The paper \cite{JKNA23} considered logarithmic sensitivity $\chi(c)=c^{-1}$ and showed finite-time singularity formation of the following system:
\begin{equation*}
    \left\{
    \begin{aligned}
    &\partial_{t} \rho = - \nabla \cdot \left (c^{-1}\rho \nabla c\right),\\
    &\partial_{t} c = - c \rho,
    \end{aligned}
    \right.
\end{equation*}
for nonvanishing initial data. The name `logarithmic sensitivity' originated from $c^{-1}\nabla c=\nabla \log c$, and it is closely related to tumor angiogenesis, an example of chemotaxis. (See \cite{LSN00, WXY16, JKNA23} and references therein.)

\medskip

\noindent \textbf{Partially inviscid 2D Boussinesq system.}
The 2D Boussinesq system for incompressible fluid takes the form
\begin{equation*}
    \left\{
    \begin{aligned}
    &\partial_{t} \rho  + u \cdot \nabla \rho= \nu_{\rho}\Delta \rho,\\
    &\partial_{t} u + u \cdot \nabla u + \nabla p = \nu_{u}\Delta u +\rho e_2,\\
    &\nabla \cdot u=0
    \end{aligned}
    \right.
\end{equation*}
where $e_2=(0,1)$. This system plays a crucial role in the atmospheric sciences and is closely related to 3D Euler and Navier-Stokes equations. The global regularity of the system is well-known when $\nu_{\rho},\,\nu_{u}>0$ in \cite{Cannon80}. Even in partially inviscid cases, i.e., $\nu_{\rho}=0,\,\nu_{u}>0$ or $\nu_{\rho}>0,\,\nu_{u}=0$, the system enjoys global well-posedness (\cite{C06}).

\subsection{Main results and Discussion}
The goal of this paper is to establish the global well-posedness of smooth solutions for \eqref{KS-E} in $\mathbb{R}^2$.
Our physical motivation for replacing the Navier-Stokes equations with the Euler equations is that a fluid having very low viscosity was used in the experiment conducted in \cite{Tuval05}. From a mathematical point of view, the global regularity result of the partially inviscid 2D Boussinesq system arouses curiosity about that of \eqref{KS-E}. Moreover, norm growths of solutions to \eqref{KS-E} could lead to instabilities in \eqref{KS-N} with very small $\nu_u$, which was investigated numerically in \cite{Tuval05}. This kind of result can be observed in other hyperbolic-elliptic type Keller-Segel equations (see \cite{Winkler14, KKS16}).

In this note, we shall always assume that $\rho, c\ge0$, which is preserved by the dynamics of \eqref{KS-E}. Moreover, since our main purpose is not to relax conditions imposed on $\chi$ and $k$, but to show that the diffusion effect in the fluid equation is not a key factor determining global well-posedness of \eqref{KS-N} in $\mathbb{R}^2$, we shall assume throughout that
\begin{equation}\label{intro}
    \chi(c)\equiv 1, \quad k(c)=c,
\end{equation}
which can be considered prototypical. Upon these assumptions, we firstly prove the local existence of smooth solutions for arbitrary smooth initial data in Section \ref{lwp-proof}.
\begin{theorem}[Local well-posedness]\label{thm: lwp}
Let $m>3$. Assume that $\chi$ and $k$ are defined by \eqref{intro}, and initial data $\left (\rho_{0}, c_{0}, u_{0}\right)$ and $\phi$ satisfy
\begin{equation}\label{assumption thm 1.1}
    \left\{
    \begin{aligned}
    &\rho_{0}\ge 0, \, c_{0}\ge 0,\\
    &\left (\rho_{0}, c_{0}, u_{0}\right) \in \left (H^m \times H^{m+1} \times H^{m+1}\right) (\mathbb{R}^2),\\
    &\norm{\nabla^{\alpha} \phi}_{L^{\infty}\left (\mathbb{R}^2\right)} < \infty \quad \text{for} \;\, 1\le |\alpha| \le m+2.
    \end{aligned}
    \right.
\end{equation}
Then there exist some $T_{loc}>0$ and a unique solution $\left (\rho,c,u\right)$ with initial data $\left (\rho_{0}, c_{0}, u_{0}\right)$ for \eqref{KS-E} satisfying
\begin{equation}\label{result thm 1.1}
    \left\{
    \begin{aligned}
    &\left (\rho, c, u \right) \in C\left ([0,T_{loc}]; \left (H^m \times H^{m+1} \times H^{m+1}\right )(\mathbb{R}^2)\right ),\\
    &\left (\nabla \rho, \nabla c\right) \in L^{2}\left ([0,T_{loc}]; \left (H^m \times H^{m+1}\right  )(\mathbb{R}^2\right)).
    \end{aligned}
    \right.
\end{equation}
\end{theorem}

\medskip

Next, in Section \ref{gwp-proof}, we show that these smooth solutions can be extended globally, provided that $\normifr{c_0}$ is sufficiently small.
\begin{theorem}[Global well-posedness]\label{thm: gwp}
Let the assumptions in Theorem \ref{thm: lwp} holds. Then there exists a constant $\delta_0>0$ such that if $\normifr{c_0}\le \delta_0$, then the unique solution in Theorem \ref{thm: lwp} exists globally in time.
\end{theorem}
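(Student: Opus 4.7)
By Theorem \ref{thm: lwp} and a standard continuation argument, the task reduces to showing that on every finite interval $[0,T]$ the local solution admits a priori bounds in $H^m\times H^{m+1}\times H^{m+1}$. I will organize the argument around the quantity $\int_0^T\normifr{\nabla u}\,dt$, whose finiteness rules out blow-up of the top-order Sobolev norms via a Beale--Kato--Majda-type mechanism for the 2D Euler block combined with parabolic estimates for the $(\rho,c)$ block.

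The first step harvests the basic bounds. Since $u$ is divergence-free, $\rho\ge 0$, and the reaction $-c\rho$ is dissipative, the maximum principle applied to the $c$-equation gives
\begin{equation*}
\normifr{c(t)}\le\normifr{c_0}\le\epsilon_0\quad\text{for all }t\ge 0,
\end{equation*}
while mass conservation yields $\norma{\rho(t)}=\norma{\rho_0}$. A weighted $L^p$ estimate testing the $\rho$-equation against $\rho^{p-1}e^{\lambda c}$, with $\lambda=\lambda(p,\nu_\rho,\nu_c)$ chosen so that the cross terms between diffusion and the chemotactic drift cancel, then propagates to time-uniform bounds on $\normp{\rho}$ for every $p\in[1,\infty)$ once $\epsilon_0$ is small enough; parabolic regularity for the $c$-equation converts these into space-time bounds on $\nabla c$ and $\nabla^2 c$.

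The heart of the argument is the $W^{1,q}$-energy estimate for $\rho$ with some fixed $q>2$, following Chae \cite{C06}. Differentiating the $\rho$-equation and testing with $|\nabla\rho|^{q-2}\nabla\rho$ yields, schematically,
\begin{equation*}
\frac{d}{dt}\norm{\nabla\rho}_{L^q}^q\lesssim \normifr{\nabla u}\norm{\nabla\rho}_{L^q}^q+F(c)\norm{\nabla\rho}_{L^q}^q+(\text{lower order}),
\end{equation*}
where the coefficient $F(c)$, collecting the dangerous contributions of $\nabla\cdot(\rho\nabla c)$, is absorbed by the dissipation thanks to the smallness of $\normifr{c}$; choosing $q>2$ is crucial so that $W^{1,q}(\mathbb{R}^2)\hookrightarrow L^\infty(\mathbb{R}^2)$ delivers $\normifr{\rho}$ from this estimate. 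In parallel, the vorticity $\omega=\partial_1 u_2-\partial_2 u_1$ satisfies
\begin{equation*}
\partial_t\omega+u\cdot\nabla\omega=\partial_1\rho\,\partial_2\phi-\partial_2\rho\,\partial_1\phi,
\end{equation*}
so integrating along characteristics gives $\norm{\omega(t)}_{L^q}\le\norm{\omega_0}_{L^q}+\normifr{\nabla\phi}\int_0^t\norm{\nabla\rho(s)}_{L^q}\,ds$. Calder\'on--Zygmund then controls $\norm{\nabla u}_{L^q}$, and a Brezis--Gallouet / Kozono--Taniuchi logarithmic inequality bounds $\normifr{\nabla u}$ by $\norm{\omega}_{L^q}(1+\log(e+\norm{u}_{H^{m+1}}))$, after which Gronwall closes the top-order $H^m\times H^{m+1}\times H^{m+1}$ estimates.

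The main obstacle, and the reason smallness is imposed on $\normifr{c_0}$ rather than on $\normifr{\rho_0}$, is twofold: the absence of viscosity in the velocity equation forces all control of $\nabla u$ to pass through $\omega$ with an unavoidable logarithmic loss, and simultaneously the Keller--Segel coupling $\nabla\cdot(\rho\nabla c)$ introduces second derivatives of $c$ into the $W^{1,q}$-loop for $\rho$ that are only marginally controlled by the parabolic regularity of the $c$-equation. It is precisely the smallness of $\normifr{c_0}$ that drives those chemotactic contributions into a small-coefficient regime and allows the $W^{1,q}$-loop---absent in Chae's pure Boussinesq setting---to be closed against the logarithmic growth coming from the Euler block.
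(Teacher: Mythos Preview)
Your plan captures the broad shape of the argument---continuation criterion reduced to controlling $\int_0^T\normifr{\nabla u}\,dt$, a $W^{1,q}$ loop inspired by \cite{C06}, and a logarithmic closure---but it contains a genuine gap at the closing step and mischaracterizes the structure of the $W^{1,q}$ estimate.

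The fatal issue is the claimed inequality $\normifr{\nabla u}\lesssim\normq{\omega}\bigl(1+\log(e+\normhh{u})\bigr)$ for a fixed finite $q$. No such bound holds: in two dimensions $\nabla u$ is a Calder\'on--Zygmund operator applied to $\omega$, and interpolating between $\normq{\omega}$ and $\normhs{\omega}$ produces a \emph{positive power} of the high-order norm, not a logarithm. The endpoint Beale--Kato--Majda and Kozono--Taniuchi inequalities all require $\normif{\omega}$ (or $\mathrm{BMO}$) as the prefactor. If you feed your inequality into $\frac{d}{dt}X\lesssim(1+\normifr{\nabla u})X$ you obtain $\frac{d}{dt}X\lesssim X^{1+\theta}$ for some $\theta>0$, which does not close. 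The paper fills this gap with an entire additional layer: after the $L^q$ step it upgrades $(\nabla\rho,\nabla^2 c)$ to $L^\infty$ via Duhamel and heat-kernel smoothing, then integrates the vorticity equation along characteristics to get $\normif{\omega}$, and only then applies the inequality $\normif{\nabla u}\lesssim \normb{u}+\normif{\omega}\bigl(1+\log_+(\norm{\omega}_{H^2}/\normif{\omega})\bigr)$ to close the top-order estimate.

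There is a second, structural discrepancy in the $W^{1,q}$ step. You assert that the chemotactic contributions carry a coefficient $F(c)$ made small by $\normif{c}\le\epsilon_0$ and are absorbed by the dissipation. In fact the coefficients that arise in the $L^q$ estimate for $\nabla\rho$ are $\normif{\nabla c}$ and $\normif{u}$, neither of which is small. The paper handles them by applying the Brezis--Wainger inequality \eqref{ine: B-W} \emph{to $u$ and to $\nabla c$} (not to $\nabla u$), which costs only a factor $(1+\normb{\nabla^2 c}^2)\bigl(1+\log(1+Y)\bigr)$ with $Y=\normq{\nabla\rho}^q+\normq{\nabla c}^q+\normq{\nabla^2 c}^q+\normq{\omega}^q$, and then closes via a double Gr\"onwall using the a priori bound $\nabla^2 c\in L^2_tL^2_x$. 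In particular the $W^{1,q}$ loop must couple $\nabla\rho$, $\nabla^2 c$, and $\omega$ simultaneously; running it for $\nabla\rho$ alone does not close. Finally, the smallness of $\normifr{c_0}$ is used only once in the paper---in a De~Giorgi-type argument giving $\rho\in L^\infty_tL^\infty_x$---and is not invoked again to absorb anything inside the $W^{1,q}$ loop; your weighted $\rho^{p-1}e^{\lambda c}$ test function is a reasonable alternative route to $L^p$ bounds on $\rho$, but it does not change the subsequent difficulties.
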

Compared with the aforementioned global regularity results of \eqref{KS-N} in $\mathbb{R}^2$  (\cite{CKL13, CKL14}), it is natural to assume $\normif{c_0}\ll 1$ since we adopted $\chi, \, k$ as \eqref{intro}. Moreover, recalling the global well-posedness results of 2D partially inviscid Boussinesq system, it is surprising that \eqref{KS-E} also enjoys global regularity even though the equation of $\rho$ involves many derivatives and $\rho,\,c$ complicatedly interact to each other.
Moreover, we can compare Theorem \ref{thm: gwp} with the aformentioned global regularity results in \cite{CKL14} where the authors dealt with the case of $\nu_{c}=0$. In that case, although $\Delta c$ dropped, the remaining term $-c\rho$ has a dissipation effect on solutions, which enables us to expect global well-posedness of the system. However, deleting the diffusion effect of $u$ causes a significant loss of regularity, which makes our result more interesting.

Main ingredient of the proof of Theorem \ref{thm: gwp} is to use the Sobolev embedding $W^{1,q}(\mathbb{R}^2)\hookrightarrow L^{\infty}(\mathbb{R}^2)$ with $q>2$ and to apply the sharp version of Sobolev inequality, the Br\'ezis-Wainger inequality (see \cite{BW80} or \cite{E89}):
\begin{equation}\label{ine: B-W}
    \normifr{f} \lesssim \normbr{f} + \left(1+\normbr{\nabla f}\right)\left(1 + \log_+ \normqr{\nabla f} \right)^{\frac{1}{2}}  \;\, (q>2),
\end{equation}
which are motivated by \cite{C06}. Employing this sharp inequality, we can overcome the loss of regularity which results from the absence of $\Delta u$.

\subsection*{Notation}
We employ the letter $C=C(a,b,\cdots)$ to denote any constant depending on $a,b,\cdots$, which may change from line to line in a given computation. We sometimes use $A\lesssim B$, which means $A\le CB$ for some constant $C$.

\section{Local well-posedness}\label{lwp-proof}
In this section, we prove Theorem \ref{thm: lwp}. Hereafter, without loss of generality, we assume that $\nu_\rho=\nu_c=1$.
We shall divide the proof into two steps, which correspond to the existence and uniqueness of a solution, respectively.\\

\noindent $\mathbf{Step\,1.\;\, Existence}$

\noindent The proof of existence can be done using viscous approximation.
Given fixed $m>3$, $\epsilon>0$ and initial data $(\rho_0,c_0,u_0) \in H^{m} \times H^{m+1} \times H^{m+1}$, we first mollify it in a way that 
\begin{equation}\label{good initial}
    (\rho_{0,\epsilon},c_{0,\epsilon},u_{0,\epsilon}) \in H^{\infty} \times H^{\infty} \times H^{\infty}
\end{equation}
with $H^\infty := \bigcap_{k=0}^{\infty}H^k$ and $(\rho_{0,\epsilon},c_{0,\epsilon},u_{0,\epsilon})$ converges to $(\rho_0,c_0,u_0)$ strongly in the norm $H^{m} \times H^{m+1} \times H^{m+1}$ as $\epsilon \rightarrow 0$. Furthermore, the strong convergence of $(\rho_{0,\epsilon},c_{0,\epsilon},u_{0,\epsilon})$ to $(\rho_0,c_0,u_0)$ in $H^{m} \times H^{m+1} \times H^{m+1}$ enables us to take a sufficiently small $\epsilon_0>0$ such that
\begin{equation}\label{bound of good initial}     
\sup_{\epsilon\in(0,\epsilon_0]}\left(\normh{\rho_{0,\epsilon}}^2 + \normhh{c_{0,\epsilon}}^2 + \normhh{u_{0,\epsilon}}^2 \right) \le 2\left(\normh{\rho_0}^2 + \normhh{c_0}^2 + \normhh{u_0}^2 \right).
\end{equation}
This mollification is done simply to make initial data belong to $H^k$ for any large $k$.

Now we consider
\begin{equation}\label{KS-Ns-epsilon}
    \left\{
    \begin{aligned}
    &\partial_{t} \rho_{\epsilon} + u_{\epsilon} \cdot \nabla \rho_{\epsilon} = \Delta \rho_{\epsilon} - \nabla \cdot \left (\rho_{\epsilon} \nabla c_{\epsilon}\right),\\
    &\partial_{t} c_{\epsilon} + u_{\epsilon} \cdot \nabla c_{\epsilon} = \Delta c_{\epsilon} - c_{\epsilon} \rho_{\epsilon},\\
    &\partial_{t} u_{\epsilon} + u_{\epsilon} \cdot \nabla u_{\epsilon} + \nabla p_{\epsilon} = \epsilon\Delta u_{\epsilon} + \rho_{\epsilon} \nabla \phi,\\
    &\nabla \cdot u_{\epsilon} = 0, \\
    &\rho_{\epsilon}(t=0)=\rho_{0,\epsilon},\;\, c_{\epsilon}(t=0)=c_{0,\epsilon},\;\, u_{\epsilon}(t=0)=u_{0,\epsilon}.\\
    \end{aligned}
    \right.
\end{equation}
The local well-posedness of \eqref{KS-Ns-epsilon} in $\mathbb{R}^2$ was already established in \cite{CKL13}:
\begin{proposition}\label{proposition}
   For any $\epsilon >0$ and any $k>0$, there exist a unique classical solution $(\rho_{\epsilon},c_{\epsilon},u_{\epsilon})$ of \eqref{KS-Ns-epsilon} and its maximal time of existence $T_{max}(\epsilon,k)>0$ such that
\begin{equation*}
    \left (\rho_{\epsilon}, c_{\epsilon}, u_{\epsilon} \right) \in C^1\left ([0,T_{max}(\epsilon,k)); H^k \times H^{k+1} \times H^{k+1}\right ).
\end{equation*}
\end{proposition}
We now fix sufficiently large $k=k_0$ so that the computations in the following estimates of $(\rho_\epsilon,c_\epsilon,u_\epsilon)$ over the time interval $[0,T_{max}(\epsilon,k_0))$ can be justified ($k_0>m+3$ is sufficient by the Sobolev embedding $H^{k_0}(\mathbb{R}^2)\hookrightarrow C^{m+2}(\mathbb{R}^2)$ for $k_0>m+3$). In these estimates, $C$'s represent constants independent of $\epsilon$.
To begin with, we estimate $\normh{\rho_{\epsilon}}$. We recall the Kato-Ponce commutator estimate in \cite{KPV91}:
\begin{equation}\label{comm-1}
    \sum_{{0\le |\alpha| \le m}} \normp{\nabla^{\alpha} \left (fg\right  )- f \nabla^{\alpha} g} \lesssim\norm{\nabla f}_{L^{p_1}}\norm{g}_{W^{m-1,p_2}} + \norm{g}_{L^{p_3}}\norm{f}_{W^{m,p_4}},
\end{equation}
with $p,p_2, p_3 \in \left (1,\infty\right)$ such that $\frac{1}{p} = \frac{1}{p_1} + \frac{1}{p_2} = \frac{1}{p_3} +\frac{1}{p_4}$.
Using $\nabla \cdot u_{\epsilon}=0$ and \eqref{comm-1}, we obtain 
\begin{align*}
    |\left\langle u_{\epsilon} \cdot \nabla \rho_{\epsilon} , \rho_{\epsilon} \right\rangle_{H^m}| 
    &= \left| \sum_{{0\le |\alpha| \le m}} \int \nabla^{\alpha}\left (u_{\epsilon} \cdot \nabla \rho_{\epsilon} \right  ) \nabla^{\alpha}\rho_{\epsilon}\;\right |\\
&=\left| \sum_{{0\le |\alpha| \le m}} \int \left (\nabla^{\alpha}\left (u_{\epsilon} \cdot \nabla \rho_{\epsilon} \right  ) - u_{\epsilon} \cdot \nabla^{\alpha} \nabla \rho_{\epsilon}\right  ) \nabla^{\alpha}\rho_{\epsilon}\;\right | \\
&\le C\left (\normif{\nabla u_{\epsilon}}\normh{\rho_{\epsilon}} + \normd{\nabla \rho_{\epsilon}}\norm{u_{\epsilon}}_{W^{m,4}}\right  )\normh{\rho_{\epsilon}}\\
&\le C\left(\normif{\nabla u_{\epsilon}}\normh{\rho_{\epsilon}}^2 + \normd{\nabla \rho_{\epsilon}}\normh{u_{\epsilon}}^{\frac{1}{2}}\normhh{u_{\epsilon}}^{\frac{1}{2}}\normh{\rho_{\epsilon}}\right)\\
&\le C\left(\left (\normif{\nabla u_{\epsilon}} + \normd{\nabla \rho_{\epsilon}}^2\right  )\normh{\rho_{\epsilon}}^2 +\normhh{u_{\epsilon}}^2\right),
\end{align*}
where we applied the Ladyzhenskaya inequality:
\begin{equation}\label{ine: sob-1}
    \normdr{f} \lesssim \normbr{f}^\frac12 \normbr{\nabla f}^\frac12
\end{equation}
to the fourth line.
On the other hand, we recall an elementary product estimate in \cite{KPV91}:
\begin{equation}\label{pro. est.}
    \normhs{fg} \lesssim \normhs{f}\normif{g} + \normhs{g}\normif{f}\;\,\text{with}\;\,s>0.
\end{equation}
This gives
\begin{align*}
    |\left\langle \rho_{\epsilon} \nabla c_{\epsilon} , \nabla \rho_{\epsilon} \right\rangle_{H^m}| &
    \le C \normh{\rho_{\epsilon}\nabla c_{\epsilon}}\normh{\nabla \rho_{\epsilon}} \\
    &\le C \left( \normif{ \rho_{\epsilon}}\normh{\nabla c_{\epsilon}} + \normh{\rho_{\epsilon}}\normif{\nabla c_{\epsilon}}\right  )\normhh{\rho_{\epsilon}}\\
    &\le C\left (\normif{\rho_{\epsilon}}^2 \normhh{c_{\epsilon}}^2 + \normif{\nabla c_{\epsilon}}^2 \normh{\rho_{\epsilon}}^2\right  ) + \frac{1}{6}\normhh{\rho_{\epsilon}}^2.
\end{align*}
Hence, we have
\begin{equation*}
    \begin{split}
        \frac{1}{2} \frac{d}{dt} \normh{\rho_{\epsilon}}^2 + \normh{\nabla \rho_{\epsilon}}^2 
        &\le |\left\langle u_{\epsilon} \cdot \nabla \rho_{\epsilon} , \rho_{\epsilon} \right\rangle_{H^m}| + |\left\langle \rho_{\epsilon} \nabla c_{\epsilon} , \nabla \rho_{\epsilon} \right\rangle_{H^m}| \\
        &\le C\left ( \left (\normd{\nabla \rho_{\epsilon}}^2 +\normif{\nabla c_{\epsilon}}^2 + \normif{\nabla u_{\epsilon}} \right  )\normh{\rho_{\epsilon}}^2 +\normif{\rho_{\epsilon}}^2\normhh{c_{\epsilon}}^2+\normhh{u_{\epsilon}}^2  \right ) \\
        &\qquad+ \frac{1}{6}\normhh{\rho_{\epsilon}}^2.
    \end{split}
\end{equation*}
With a similar argument, \eqref{comm-1} and \eqref{pro. est.} yield
\begin{equation*}
    \begin{split}
        \frac{1}{2} \frac{d}{dt} \normhh{c_{\epsilon}}^2 + \normhh{\nabla c_{\epsilon}}^2 
        &\le |\left\langle u_{\epsilon} \cdot \nabla c_{\epsilon} , c_{\epsilon} \right\rangle_{H^{m+1}}| + |\left\langle c_{\epsilon} \rho_{\epsilon} , c_{\epsilon} \right\rangle_{H^{m+1}}| \\
        &\le C\left(\left (\normif{\nabla u_{\epsilon}}\normhh{c_{\epsilon}}+\normif{\nabla c_{\epsilon}}\normhh{u_{\epsilon}}\right  )\normhh{c_{\epsilon}}\right. \\
        &\left.\qquad \quad +\left (\normif{c_{\epsilon}}\normhh{\rho_{\epsilon}}+\normif{\rho_{\epsilon}}\normhh{c_{\epsilon}}\right  )\normhh{c_{\epsilon}}\right) \\
        &\le C\left (\left (\normif{\rho_{\epsilon}}+\normif{c_{\epsilon}}^2+\normif{\nabla c_{\epsilon}}^2 + \normif{\nabla u_{\epsilon}}\right  )\normhh{c_{\epsilon}}^2 + \normhh{u_{\epsilon}}^2\right  ) \\
        &\qquad+ \frac{1}{6}\normhh{\rho_{\epsilon}}^2,
    \end{split}
\end{equation*}
and 
\begin{equation*}
    \begin{split}
        \frac{1}{2} \frac{d}{dt} \normhh{u_{\epsilon}}^2 + \epsilon\normhh{\nabla u_{\epsilon}}^2
        &\le |\left\langle u_{\epsilon} \cdot \nabla u_{\epsilon} , u_{\epsilon} \right\rangle_{H^{m+1}}| + |\left\langle \rho_{\epsilon} \nabla \phi , u_{\epsilon} \right\rangle_{H^{m+1}}| \\
        & \le C\left(\normif{\nabla u_{\epsilon}}\normhh{u_{\epsilon}}^2 + \normhh{\rho_{\epsilon}}\normhh{u_{\epsilon}} \right) \\
        &\le C\left (1+\normif{\nabla u_{\epsilon}}\right)\normhh{u_{\epsilon}}^2 + \frac{1}{6}\normhh{\rho_{\epsilon}}^2.
    \end{split}
\end{equation*}
Combining all, we obtain that for $t\in [0,T_{max}(\epsilon,k_0))$,
\begin{equation}\label{est: local}
    \begin{split}
        \frac{d}{dt}X_{\epsilon}+ \normh{\nabla \rho_{\epsilon}}^2 + &\normhh{\nabla c_{\epsilon}}^2 \\
        &\le C \left (1+\normif{\rho_{\epsilon}}+\normif{\rho_{\epsilon}}^2+\normd{\nabla \rho_{\epsilon}}^2+\normif{c_{\epsilon}}^2+\normif{\nabla c_{\epsilon}}^2+\normif{\nabla u_{\epsilon}}\right) X_{\epsilon},
    \end{split}
\end{equation}
where $X_{\epsilon}(t)$ is defined by
\begin{equation*}\label{definition of X}
X_{\epsilon}(t):=\left(\normh{\rho_{\epsilon}}^2+\normhh{c_{\epsilon}}^2+\normhh{u_{\epsilon}}^2\right)(t)
\end{equation*}
on $[0,T_{max}(\epsilon,k_0))$.
Since $m>3$, the Sobolev embeddings, $H^1\left (\mathbb{R}^2\right) \hookrightarrow L^{q}\left (\mathbb{R}^2\right)$ with $q\in[2,\infty)$ and $H^2\left (\mathbb{R}^2\right) \hookrightarrow L^{\infty}\left (\mathbb{R}^2\right)$, provide us with
\begin{equation}\label{local final X bound}
    \frac{d}{dt}X_{\epsilon} + \normh{\nabla \rho_{\epsilon}}^2 + \normhh{\nabla c_{\epsilon}}^2 \le C X_{\epsilon}^2
\end{equation}
on $[0,T_{max}(\epsilon,k_0))$.
We now claim that for any $\epsilon\in(0,\epsilon_0]$,
\begin{equation}\label{definition of Tloc}
    T_{max}(\epsilon,k_0)> T_{loc}:=\frac{1}{4C\left(\normh{\rho_0}^2+\normhh{c_0}^2+\normhh{u_0}^2 \right)},
\end{equation}
where $\epsilon_0$ is from \eqref{bound of good initial}. Suppose, toward the contrary, that there exists $\epsilon_1\in (0,\epsilon_0]$ such that $T_{max}(\epsilon_1,k_0)\le T_{loc}$. Then we have
$T_{max}(\epsilon_1,k_0)\le T_{loc} \le \frac{1}{2CX_{\epsilon_1}(0)}$ by \eqref{bound of good initial}, and therefore \eqref{local final X bound}, the ODE comparison principle, and \eqref{bound of good initial} give 
\begin{equation*}
    X_{\epsilon_1}(t) + \int_{0}^{t} \normh{\nabla \rho_{\epsilon_1}(s)}^2 + \normhh{\nabla c_{\epsilon_1}(s)}^2 \; ds \le 2X_{\epsilon_1}(0) \le 4\left(\normh{\rho_0}^2+\normhh{c_0}^2+\normhh{u_0}^2 \right)
\end{equation*}
on $[0,T_{max}(\epsilon_1,k_0))$. But this is a contradiction to the definition of $T_{max}(\epsilon_1,k_0)$, so that we have shown \eqref{definition of Tloc}. The upshot is that we have $T_{loc}$ independent of $\epsilon$, and the uniform-in-$\epsilon$ bound
\begin{equation}\label{bound of X}
    X_{\epsilon}(t) + \int_{0}^{t} \normh{\nabla \rho_{\epsilon}(s)}^2 + \normhh{\nabla c_{\epsilon}(s)}^2 \; ds \le 4\left(\normh{\rho_0}^2+\normhh{c_0}^2+\normhh{u_0}^2 \right)
\end{equation}
on $[0,T_{loc}]$ whenever $\epsilon\in(0,\epsilon_0]$.
In this way, we conclude that for any $\epsilon\in(0,\epsilon_0]$, $(\rho_{\epsilon},c_{\epsilon},u_{\epsilon})$ and $(\nabla\rho_{\epsilon},\nabla c_{\epsilon})$ are uniformly bounded in 
\begin{equation*}
    C \left ([0,T_{loc}]; H^{m} \times H^{m+1} \times H^{m+1}\right) \quad \text{and} \quad L^2 \left ([0,T_{loc}]; H^{m} \times H^{m+1} \right),
\end{equation*}
repectively. Moreover, from the equation \eqref{KS-Ns-epsilon}, one can see that $(\partial_t\rho_{\epsilon},\partial_tc_{\epsilon},\partial_tu_{\epsilon})$ is uniformly bounded in $C \left ([0,T_{loc}]; L^2 \times L^2 \times L^2\right)$.
Hence we use the Banach-Alaoglu theorem, Aubin-Lions compactness lemma, and diagonal argument to obtain a limit point $(\rho,c,u)$ 
and a subsequence $\epsilon_n \rightarrow 0$ as $n\rightarrow \infty$ such that
\begin{equation}\label{many convergences}
    \left\{\begin{aligned}
         (\rho_{\epsilon_n},c_{\epsilon_n},u_{\epsilon_n}) \rightharpoonup^* (\rho,c,u) \quad \text{weakly-* in } & C \left ([0,T_{loc}]; H^{m} \times H^{m+1} \times H^{m+1}\right) \\
         (\rho_{\epsilon_n},c_{\epsilon_n},u_{\epsilon_n}) \rightarrow (\rho,c,u) \quad \text{strongly in } & C \left ([0,T_{loc}]; H^{s}_{loc}\times H^{s+1}_{loc} \times H^{s+1}_{loc}\right) \quad (\forall s<m) \\
         (\nabla\rho_{\epsilon_n},\nabla c_{\epsilon_n}) \rightharpoonup (\nabla\rho,\nabla c) \quad \text{weakly in } &L^2 \left ([0,T_{loc}]; H^{m} \times H^{m+1} \right)
    \end{aligned}
    \right.
\end{equation}
In particular, since $m> 3$, the Sobolev embedding $H^s\left (\mathbb{R}^2\right) \hookrightarrow C^2\left (\mathbb{R}^2\right) $ $(s>3)$ guarantees that the above strong convergence is in $C \left ([0,T_{loc}]; C^2_{loc}\times C^3_{loc} \times C^3_{loc}\right)$, which is enough to ensure pointwise convergence of each term in \eqref{KS-E} with \eqref{intro}. Thus using convergences in \eqref{many convergences}, we can show that the limit $(\rho,c,u)$ is a solution to \eqref{KS-E} with \eqref{intro} and satisfies
\begin{equation*}
    (\rho,c,u) \in C \left ([0,T_{loc}]; H^{m} \times H^{m+1} \times H^{m+1}\right), \quad  (\nabla \rho ,\nabla c) \in L^2 \left ([0,T_{loc}]; H^{m} \times H^{m+1} \right).
\end{equation*}

\medskip

\noindent $\mathbf{Step\,2.\;\, Uniqueness}$

\noindent The uniqueness of the above local-in-time solution is derived as follows. Assume that there exist two local-in-time solutions $(\rho_1,c_1,u_1)$ and $(\rho_2,c_2,u_2)$ of \eqref{KS-E} with \eqref{intro} sharing the same initial data such that properties stated in Theorem \ref{thm: lwp} are satisfied. Let 
\begin{equation*}
    (\Tilde{\rho},\Tilde{c},\Tilde{u},\tilde{p}):= (\rho_1-\rho_2,c_1-c_2,u_1-u_2,p_1-p_2).
\end{equation*}
Then $(\Tilde{\rho},\Tilde{c},\Tilde{u})$ solves
\begin{equation*}
    \left\{
    \begin{aligned}
    &\partial_{t} \Tilde{\rho} + u_1 \cdot \nabla \Tilde{\rho} + \Tilde{u}\cdot \nabla \rho_2 = \Delta \Tilde{\rho} - \nabla \cdot \left (\rho_1 \nabla \Tilde{c}\right)  + \nabla \cdot \left (\Tilde{\rho} \nabla c_2\right),\\
    &\partial_{t} \Tilde{c} + u_1 \cdot \nabla \Tilde{c} + \Tilde{u}\cdot \nabla c_2 = \Delta \Tilde{c} -c_1 \Tilde{\rho} - \Tilde{c} \rho_2,\\
    &\partial_{t} \Tilde{u} + u_1 \cdot \nabla \Tilde{u} + \Tilde{u}\cdot \nabla u_2 + \nabla \tilde{p} = \Tilde{\rho} \nabla \phi,\\
    &\nabla \cdot \tilde{u} = 0, \\
    &\Tilde{\rho}(t=0)=0,\;\, \Tilde{c}(t=0)=0,\;\, \Tilde{u}(t=0)=0.\\
    \end{aligned}
    \right.
\end{equation*}
Employing $u_1$ and $\tilde{u}$ is divergence-free, we estimate from the above equation as follows:
\begin{equation*}
\begin{split}
    \frac12 \frac{d}{dt}\normb{\tilde{\rho}}^2 + \normb{\nabla\tilde{\rho}}^2 &\le \normb{\Tilde{u}}\normif{\nabla\rho_2}\normb{\tilde{\rho}} + \normif{\rho_1}\normb{\nabla \tilde{c}}\normb{\nabla \tilde {\rho}}
    +\normb{\tilde{\rho}}\normif{\nabla c_2}\normb{\nabla \tilde {\rho}} \\
    &\le C\left(1+\normif{\nabla\rho_2}^2+\normif{\rho_1}^2+\normif{\nabla c_2}^2\right)\left(\normb{\tilde{\rho}}^2 + \normb{\Tilde{u}}^2+ \normb{\nabla \tilde{c}}^2  \right) + \frac16 \normb{\nabla \tilde {\rho}}^2,
\end{split}
\end{equation*}
\begin{equation*}
\begin{split}
    \frac12 \frac{d}{dt}\normb{\tilde{c}}^2 + \normb{\nabla\tilde{c}}^2 &\le \normb{\Tilde{u}}\normif{\nabla c_2}\normb{\tilde{c}} + \normif{c_1}\normb{\tilde{\rho}}\normb{\tilde{c}} 
    +\normif{\rho_2}\normb{\tilde{c}}^2 \\
    &\le \left(1+\normif{\nabla c_2}^2+\normif{c_1}^2+\normif{\rho_2}\right)\left(\normb{\Tilde{u}}^2+ \normb{\tilde{c}}^2 +\normb{\Tilde{\rho}}^2 \right),
\end{split}
\end{equation*}
\begin{equation*}
\begin{split}
    \frac12 \frac{d}{dt}\normb{\nabla\tilde{c}}^2 + \normb{\nabla^2\tilde{c}}^2 &\le \normif{\nabla u_1}\normb{\nabla \tilde{c}}^2 
    +\normb{\nabla \tilde{u}}\normif{\nabla c_2}\normb{\nabla \tilde{c}}
    +\normb{\tilde{u}}\normif{\nabla^2 c_2}\normb{\nabla \tilde{c}} \\
    &\quad+ \normif{\nabla c_1}\normb{\tilde{\rho}}\normb{\nabla\tilde{c}}
    + \normif{c_1}\normb{\nabla\tilde{\rho}}\normb{\nabla\tilde{c}}
    +\normif{\rho_2}\normb{\nabla \tilde{c}}^2\\
    &\quad+\normb{\tilde{c}}\normif{\nabla\rho_2}\normb{\nabla\tilde{c}} \\
    &\le C\left(1+\normif{\nabla u_1}+\normif{\nabla c_2}^2+\normif{\nabla^2 c_2}^2+\normif{c_1}^2+\normif{\nabla c_1}^2+\normif{\rho_2}+\normif{\nabla\rho_2}^2\right) \\
    &\quad \times\left(\normb{\tilde{\rho}}^2  + \normb{\Tilde{u}}^2+\normb{\nabla\Tilde{u}}^2+ \normb{\tilde{c}}^2 +\normb{\nabla\tilde{c}}^2 \right) + \frac16\normb{\nabla \tilde{\rho}}^2,
\end{split}
\end{equation*}
and
\begin{equation*}
\begin{split}
    \frac12 \frac{d}{dt}\normb{\tilde{u}}^2 &\le \normif{\nabla u_2}\normb{\Tilde{u}}^2 + \normif{\nabla \phi}\normb{\tilde{\rho}}\normb{\tilde{u}} \\
    &\le C\left(1+\normif{\nabla u_2}\right)\left(\normb{\tilde{\rho}}^2+\normb{\Tilde{u}}^2\right).
\end{split}
\end{equation*}
Furthermore, let us consider the vorticity equations of fluids:
\begin{equation*}\label{eq: vorticity}
    \partial_{t} \omega_i + u_i\cdot \nabla \omega_i = \nabla^{\bot} \rho_i \cdot \nabla \phi,
\end{equation*}
where $\tilde{\omega}_i:=\nabla^{\perp}\cdot \tilde{u}_i$ with $i\in \left\{1,2\right\}$.
Denoting $\tilde{\omega}:=\omega_1-\omega_2$, we obtain
\begin{equation*}
    \partial_{t} \tilde{\omega} + u_1\cdot \nabla \tilde{\omega} + \tilde{u}\cdot \nabla \omega_2 = \nabla^{\bot} \tilde{\rho} \cdot \nabla \phi,
\end{equation*}
which gives
\begin{equation*}
\begin{split}
    \frac12 \frac{d}{dt}\normb{\tilde{\omega}}^2 &\le \normif{\nabla \omega_2}\normb{\Tilde{\omega}}^2 + \normif{\nabla \phi}\normb{\nabla\tilde{\rho}}\normb{\tilde{\omega}} \\
    &\le C\left(1+\normif{\nabla \omega_2}\right)\normb{\Tilde{\omega}}^2 + \frac16 \normb{\nabla\tilde{\rho}}^2.
\end{split}
\end{equation*}
Combining all and using  the Calder\'on-Zygmund inequality: $\normb{\nabla \tilde{u}} \lesssim \normb{\tilde{\omega}}$, we arrive at
\begin{equation*}
    \frac{d}{dt}\tilde{X}(t) \le C F(t) \tilde{X}(t),
\end{equation*}
where
\begin{equation*}
    \tilde{X}(t)= \left(\normb{\tilde{\rho}}^2 + \normb{\tilde{c}}^2 + \normb{\nabla\tilde{c}}^2 +\normb{\tilde{u}}^2+\normb{\tilde{\omega}}^2\right)(t)
\end{equation*}
and
\begin{equation*}
\begin{split}
    F(t)=1+&\left(\normif{\nabla u_1}+\normif{\nabla u_2}+\normif{\nabla \omega_2}+\normif{\nabla c_2}^2+\normif{\nabla^2 c_2}^2 \right.\\
    &\quad\left.+\normif{c_1}^2 
    +\normif{\nabla c_1}^2+\normif{\rho_1}^2+\normif{\rho_2}+\normif{\nabla\rho_2}^2\right)(t).
\end{split}
\end{equation*}
Note that the Sobolev embedding enables us to bound $\sup_{0\le t\le T_{loc}}F(t)$, where the time $T_{loc}$ is from \eqref{result thm 1.1}. Since $X(0)=0$, the Gr\"onwall's inequality gives us $X(t)=0$ on $[0,T_{loc}]$.
$\Box$

\section{Global well-posedness}\label{gwp-proof}
In this section, we show Theorem \ref{thm: gwp}. 
Our strategy involves employing the viscous approximation as in Step 1 of the last section. But in this time we rely on the following global well-posedness result of \eqref{KS-Ns-epsilon} in $\mathbb{R}^2$ from \cite{CKL14}:
\begin{proposition}\label{proposition-2}
    For any $\epsilon>0$ and any $k>0$, there exists a constant $\delta>0$ independent of $\epsilon$  such that if $\normif{c_{0,\epsilon}}\le \delta$, then there exists a unique classical solution $(\rho_{\epsilon},c_{\epsilon},u_{\epsilon})$ of \eqref{KS-Ns-epsilon} such that
\begin{equation}\label{regualrity for global}
    \left (\rho_{\epsilon}, c_{\epsilon}, u_{\epsilon} \right) \in C^1\left ([0,\infty); H^k \times H^{k+1} \times H^{k+1}\right ).
\end{equation}
\end{proposition}
\begin{remark}
    Here it is important that we can choose $\delta$ independent of $\epsilon$. Since this part is not explicitly revealed in \cite{CKL14}, we shall provide an explanation here.  
    In \cite{CKL14}, the authors essentially derived the following blow-up criterion for \eqref{KS-Ns-epsilon} with $\epsilon>0$ (Theorem 1 in \cite{CKL14}):

    If $T_{max}(\epsilon,k)$ is a maximal time of existence with $T_{max}(\epsilon,k)<\infty$, then
    \begin{equation*}
        \norm{\rho_\epsilon}_{L^q(0,T_{max}(\epsilon,k);L^p)}=\infty, \quad \frac1p+\frac1q \le 1,\quad 1<p\le \infty.
    \end{equation*}
    Then the authors chose $\delta=\frac{1}{24p}$ with $p\in(1,\infty)$ and showed (Proposition 2 in \cite{CKL14}):
    
    If $\normif{c_{0,\epsilon}}\le \delta$, then
    \begin{equation}\label{inequality in the first remark}
        \norm{\rho_\epsilon(t)}_{L^p}\le e^{6(p-1)\normif{c_{0,\epsilon}}}\norm{\rho_{0,\epsilon}}_{L^p}
    \end{equation}
    for $t\in[0,T_{max}(\epsilon,k))$, which implies $T_{max}(\epsilon,k)=\infty$ by the above blow-up criterion. The key point here is that in order to obtain \eqref{inequality in the first remark}, the authors only made use of the incompressibility of $u_\epsilon$, the equation of $\rho_{\epsilon}$, and $\normif{c_{0,\epsilon}}\le \delta$, so that $\epsilon>0$ from the term $\epsilon \Delta u_\epsilon$ in \eqref{KS-Ns-epsilon} does not have any impact on deriving \eqref{inequality in the first remark}. One can refer to \cite{CKL14} for details.
\end{remark}
We now fix sufficiently large $k$ and assume $\normif{c_{0,\epsilon}}\le \delta$ in Proposition \ref{proposition-2} to obtain the solution $(\rho_\epsilon,c_\epsilon,u_\epsilon)$ of \eqref{KS-Ns-epsilon} satisfying the regualrity \eqref{regualrity for global}. This regularity allows us to justify the computations in the following estimates of $(\rho_\epsilon,c_\epsilon,u_\epsilon)$. Throughout these estimates, $C$'s represent constants independent of $\epsilon$. 
Moreover, we only consider sufficiently small $\epsilon>0$ so that we can use \eqref{bound of X}.

Now let $T^*\in (0,\infty)$ be an arbitrary time. Since we already have the estimate \eqref{bound of X} on $[0,T_{loc}]$, we shall consider arbitrary $T^*\in (T_{loc},\infty)$. Our aim is to show the estimate \eqref{final estimate} on $[0,T^*]$ with $C>0$ uniformly bounded for $\epsilon>0$.

To begin with, we observe the following properties:
\begin{equation}\label{est: c}
     \sup_{t\in[0,T^*]}\normq{ c_{\epsilon}(t)}  \le  
 \normq{c_{0,\epsilon}}\le C\normq{c_0} \quad \text{for any}\;\,q \in [1,\infty],
\end{equation}
and
\begin{equation}\label{est: c-1-2-2}
     \norm{\nabla  c_{\epsilon}}_{L^2([0,T^*];L^2)} \le  \normb{c_{0,\epsilon}}\le C\normb{c_0}. 
\end{equation} 
Indeed, taking $L^2$ inner product of the equation of $ c_{\epsilon}$ with $ c_{\epsilon}^{q-1}$, the assumptions that $\nabla \cdot u_{\epsilon}=0$ and $\rho_\epsilon, c_\epsilon \ge0$ give us
\begin{equation*}
    \begin{split}
        \frac{1}{q} \frac{d}{dt} \normq{ c_{\epsilon}}^q + \frac{4(q-1)}{q^2} \normb{\nabla{ c_{\epsilon}^{\frac{q}{2}}}}^2 = -\int  c_{\epsilon}^{q-1}\nabla  c_{\epsilon}\cdot  u_{\epsilon} -\int  c_{\epsilon}^q \rho_{\epsilon}  \le 0,
    \end{split}
\end{equation*}
which implies both \eqref{est: c} and \eqref{est: c-1-2-2}. 

Next, we estimate $\normif{ \rho_{\epsilon}}$.
\begin{lemma}\label{lem: rho-infty}
There exists a constant $\delta_0 >0$ such that if $\normif{c_0}\le \delta_0$, then
\begin{equation}\label{est: rho-infty}
    \sup_{t\in[0,T^*]}\normif{ \rho_{\epsilon}(t)}\le C(\norm{\rho_0}_{L^2\cap L^\infty},\norm{c_0}_{L^2\cap L^\infty},T^*)
\end{equation}
\end{lemma}
\begin{proof}
This follows from the estimate in Theorem 2 of \cite{CKL14}:
\begin{equation}\label{real rho l^infty bound}
    \normif{ \rho_{\epsilon}(t)}\le \frac{C(\norm{\rho_0}_{L^2\cap L^\infty},\norm{c_0}_{L^2\cap L^\infty},T^*)}{(1+t)^{\frac{1}{2}}}\quad \text{for}\;\,t\in[0,T^*],
\end{equation}
which can be proved by modifying the De Giorgi's method introduced in \cite{PV12}. Even though Theorem 2 of \cite{CKL14} deals with the case $\epsilon=1$, its proof does not depend on the coefficient $\epsilon$ of diffusion term $\epsilon\Delta u_\epsilon$ in the equation of fluids. It only made use of the incompressibility of $u_\epsilon$ and equations of $\rho_\epsilon$ and $c_\epsilon$.

For clarity, we give a rough sketchy of proof. Suppose $\normif{c_0}\le \min{\left\{\frac{\delta}{2},\frac{1}{100}\right\}}:=\delta_0$, where $\delta$ is from Proposition \ref{proposition-2}. Then we may assume $\normif{c_{0,\epsilon}}\le \min{\left\{\delta,\frac{1}{50}\right\}}$ by the strong convergence of $(\rho_{0,\epsilon},c_{0,\epsilon},u_{0,\epsilon})$ to $(\rho_0,c_0,u_0)$ in $H^{m} \times H^{m+1} \times H^{m+1}$.
We first show that
\begin{equation}\label{L^2bound of rho}
    \sup_{t\in [0,T^*]}\normb{ \rho_{\epsilon}(t)} \le  Ce^{ C\normif{c_0}^2}\normb{\rho_0}.
\end{equation}
To do so, we define a positive function $\psi( c_{\epsilon})=e^{12 c_{\epsilon}^2}$. Then we can check 
\begin{equation}\label{condition of psi}
    \psi'( c_{\epsilon})\ge 0,\qquad \frac{4(\psi')^2}{\psi} + \psi + \psi' \le \frac{\psi''}{4}.
\end{equation}
On the other hand, we compute
\begin{equation*}
    \begin{split}
        \frac12 \frac{d}{dt} \int  \rho_{\epsilon}^2 \psi( c_{\epsilon}) &=\int  \rho_{\epsilon} \psi( c_{\epsilon}) (- u_{\epsilon}\cdot \nabla  \rho_{\epsilon} +\Delta  \rho_{\epsilon} - \nabla \cdot ( \rho_{\epsilon} \nabla  c_{\epsilon}) )+ \frac12\int  \rho_{\epsilon}^2 \psi'( c_{\epsilon})(- u_{\epsilon}\cdot \nabla  c_{\epsilon} +\Delta  c_{\epsilon} -  \rho_{\epsilon}  c_{\epsilon} ) \\
        &=\int  \rho_{\epsilon} \psi( c_{\epsilon}) \Delta  \rho_{\epsilon} - \int  \rho_{\epsilon} \psi( c_{\epsilon}) \nabla \cdot ( \rho_{\epsilon}\nabla  c_{\epsilon}) + \frac12 \int  \rho_{\epsilon}^2 \psi'( c_{\epsilon}) \Delta  c_{\epsilon}-\frac12 \int  \rho_{\epsilon}^2 \psi'( c_{\epsilon})  \rho_{\epsilon}  c_{\epsilon} \\
        &\quad -\int  \rho_{\epsilon} \psi( c_{\epsilon})  u_{\epsilon} \cdot \nabla  \rho_{\epsilon} -\frac12 \int  \rho_{\epsilon}^2\psi'( c_{\epsilon})  u_{\epsilon}\cdot \nabla  c_{\epsilon}.
    \end{split}
\end{equation*}
We note that $\psi'( c_{\epsilon}) u_{\epsilon}\cdot \nabla  c_{\epsilon} = \nabla \cdot (\psi( c_{\epsilon}) u_{\epsilon})$ due to $\nabla \cdot  u_{\epsilon}=0$, so that last two terms of the above equality are cancelled:
\begin{equation*}
    \int  \rho_{\epsilon}\psi( c_{\epsilon})  u_{\epsilon}\cdot \nabla  \rho_{\epsilon} + \frac12 \int  \rho_{\epsilon}^2 \psi'( c_{\epsilon}) u_{\epsilon}\cdot \nabla  c_{\epsilon}=0.
\end{equation*}
Via the integration by parts, we have
\begin{equation*}
    \begin{split}
        \frac12 \frac{d}{dt} &\int  \rho_{\epsilon}^2 \psi( c_{\epsilon}) + \int \psi( c_{\epsilon}) |\nabla  \rho_{\epsilon}|^2 +\frac12 \int  \rho_{\epsilon}^2 \psi''( c_{\epsilon}) |\nabla  c_{\epsilon}|^2 \\
        &=-2\int  \rho_{\epsilon} \psi'( c_{\epsilon}) \nabla  \rho_{\epsilon} \cdot \nabla  c_{\epsilon}+ \int  \rho_{\epsilon} \psi( c_{\epsilon}) \nabla  \rho_{\epsilon} \cdot \nabla  c_{\epsilon} +\int  \rho_{\epsilon}^2\psi'( c_{\epsilon})|\nabla  c_{\epsilon}|^2-\frac12 \int  \rho_{\epsilon}^2 \psi'( c_{\epsilon})  \rho_{\epsilon}  c_{\epsilon}.
    \end{split}
\end{equation*}
Noticing the last term of the above equality is nonpositive and using Cauchy-Schwartz inequality, we obtain
\begin{equation*}
    \begin{split}
        \frac12 \frac{d}{dt} &\int  \rho_{\epsilon}^2 \psi( c_{\epsilon}) + \frac12\int \psi( c_{\epsilon}) |\nabla  \rho_{\epsilon}|^2 +\frac12 \int  \rho_{\epsilon}^2 \psi''( c_{\epsilon}) |\nabla  c_{\epsilon}|^2 \\
        &\le 4\int  \rho_{\epsilon}^2 \frac{(\psi'( c_{\epsilon}))^2}{\psi( c_{\epsilon})}|\nabla  c_{\epsilon}|^2 + \int  \rho_{\epsilon}^2 \psi( c_{\epsilon}) |\nabla  c_{\epsilon}|^2 +\int  \rho_{\epsilon}^2\psi'( c_{\epsilon})|\nabla  c_{\epsilon}|^2.
    \end{split}
\end{equation*}
Recalling \eqref{condition of psi}, we arrive at
\begin{equation*}
    \frac12 \frac{d}{dt} \int  \rho_{\epsilon}^2 \psi( c_{\epsilon}) + \frac12\int \psi( c_{\epsilon}) |\nabla  \rho_{\epsilon}|^2 +\frac14 \int  \rho_{\epsilon}^2 \psi''( c_{\epsilon}) |\nabla  c_{\epsilon}|^2 \le 0.
\end{equation*}
Since $\psi\ge1$ and 
\begin{equation*}
    \normb{\rho_{0,\epsilon}} \le C \normb{\rho_0},\quad \normif{c_{0,\epsilon}} \le C \normif{c_0},
\end{equation*}
we can obtain \eqref{L^2bound of rho}.

Next we show \eqref{real rho l^infty bound}.
In this time, we shall compute $\frac{d}{dt}\int ( \rho_{\epsilon}-K)^2_+ \psi( c_{\epsilon}) $ for a constant $K>0$. Similarly proceeding as before, we can compute
\begin{equation*}
    \begin{split}
        \frac12 \frac{d}{dt} &\int ( \rho_{\epsilon}-K)^2_+ \psi( c_{\epsilon})
        +\int \psi( c_{\epsilon}) |\nabla( \rho_{\epsilon}-K)_+|^2
        +\frac12 \int ( \rho_{\epsilon}-K)^2_+ \psi''( c_{\epsilon})|\nabla  c_{\epsilon}|^2 \\
        &=-2\int ( \rho_{\epsilon}-K)_+\psi'( c_{\epsilon}) \nabla ( \rho_{\epsilon}-K)_+ \cdot \nabla  c_{\epsilon}
        +\int ( \rho_{\epsilon}-K)_+\psi( c_{\epsilon}) \nabla ( \rho_{\epsilon}-K)_+ \cdot \nabla  c_{\epsilon}  +\int ( \rho_{\epsilon}-K)^2_+ \psi'( c_{\epsilon})|\nabla  c_{\epsilon}|^2\\
        &\quad -\frac12\int ( \rho_{\epsilon}-K)^2_+ \psi'( c_{\epsilon})  \rho_{\epsilon}  c_{\epsilon}
        +K\int \psi( c_{\epsilon})\nabla ( \rho_{\epsilon}- c_{\epsilon})_+ \cdot \nabla  c_{\epsilon}
        +K\int ( \rho_{\epsilon}-K)_+ \psi'( c_{\epsilon})|\nabla  c_{\epsilon}|^2.
    \end{split}
\end{equation*}
We note that the last three integrands in the equality above are bounded as follows:
\begin{equation*}
    \begin{split}
    ( \rho_{\epsilon}-K)^2_+ \psi'( c_{\epsilon})  \rho_{\epsilon}  c_{\epsilon} &\le 0, \\
    \psi( c_{\epsilon})\nabla ( \rho_{\epsilon}- c_{\epsilon})_+ \cdot \nabla  c_{\epsilon} &\le \frac{1}{4K} \psi( c_{\epsilon}) |\nabla( \rho_{\epsilon}-K)_+|^2 + 4K \psi( c_{\epsilon})|\nabla  c_{\epsilon}|^2,\\
        ( \rho_{\epsilon}-K)_+ \psi'( c_{\epsilon})|\nabla  c_{\epsilon}|^2&\le \left(\frac{1}{8K}( \rho_{\epsilon}-K)^2_+ + 8K  \right) \psi'( c_{\epsilon}) |\nabla  c_{\epsilon}|^2.
    \end{split}
\end{equation*}
Using these bounds and \eqref{condition of psi}, we obtain
\begin{equation}\label{degiori1}
    \frac12 \frac{d}{dt} \int ( \rho_{\epsilon}-K)^2_+ \psi( c_{\epsilon})
        +\frac14\int \psi( c_{\epsilon}) |\nabla( \rho_{\epsilon}-K)_+|^2
        +\frac18 \int ( \rho_{\epsilon}-K)^2_+ \psi''( c_{\epsilon})|\nabla  c_{\epsilon}|^2 \le 8K^2L\int |\nabla  c_{\epsilon}|^2,
\end{equation}
where $L:=\sup_{0\le  c_{\epsilon}\le \normif{c_{0,\epsilon}}}(\psi( c_{\epsilon})+\psi'( c_{\epsilon}))$.
Multiplying the equation of $ c_{\epsilon}$ with $16K^2L$, we have
\begin{equation*}
    \frac{d}{dt} \int 8K^2L ( c_{\epsilon}-K)^2_+ 
        + \int 16K^2L |\nabla  c_{\epsilon}|^2 \le 0.
\end{equation*}
Summing this with \eqref{degiori1}, we arrive at
\begin{equation}\label{degiori-2}
\begin{split}
    \frac{d}{dt}  \left(\frac12\int ( \rho_{\epsilon}-K)^2_+ \psi( c_{\epsilon}) + \int 8K^2L ( c_{\epsilon}-K)^2_+ \right) &+\frac14\int \psi( c_{\epsilon}) |\nabla( \rho_{\epsilon}-K)_+|^2 \\
    &+\frac18 \int ( \rho_{\epsilon}-K)^2_+ \psi''( c_{\epsilon})|\nabla  c_{\epsilon}|^2 + 8K^2L\int |\nabla  c_{\epsilon}|^2 \le 0.
\end{split}
\end{equation}
Now we consider
\begin{equation*}
    U_\epsilon(\xi):=\int_0^{T^*}\nu(t)\int ( \rho_{\epsilon}-\xi\eta(t))^2_+ + \int_0^{T^*}\nu(t)\int ( c_{\epsilon}-\xi\eta(t))^2_+ ,\quad \xi>0,
\end{equation*}
where $\eta(t)$ are defined by
\begin{equation*}
    \nu(t)=(1+t)^{-1},\quad \eta(t)=(1+t)^{-\frac12}.
\end{equation*}
Using \eqref{degiori-2} and proceeding in the same manners as the proof of Theorem 2 in \cite{CKL14}, we can show that $U_\epsilon(\xi)$
is finite for every $\xi>0$ as long as $U_\epsilon(\xi)$ exists and 
\begin{equation*}\label{degiori est1}
    U_\epsilon'(\xi) \le -C \xi^{-\frac12}U_\epsilon^{\frac34},\quad \xi>\xi_{0,\epsilon}:=\max \left\{\normif{\rho_{0,\epsilon}}, \normif{c_{0,\epsilon}} \right\}
\end{equation*}
for a constant $C>0$ independent of $\epsilon$. Thus $U_\epsilon(\xi)$ vanishes at a finite value $\xi_{1,\epsilon}$ with 
\begin{equation}\label{degiori est2}
    \xi_{1,\epsilon}\le \left(\frac2C U_\epsilon^\frac14(\xi_{0,\epsilon})+\xi_{0,\epsilon}^\frac12 \right)^2.
\end{equation}
But since $ \rho_{\epsilon},\, c_{\epsilon}\ge0$, \eqref{L^2bound of rho} and \eqref{est: c} give
\begin{equation}\label{degiori est3}
        U_\epsilon(\xi_{0,\epsilon})\le \int_0^{T^*}\int  \rho_{\epsilon}^2 + \int_0^{T^*}\int  c_{\epsilon}^2
        \le  C\left(e^{ C\normif{c_{0}}^2}\normb{\rho_{0}}^2 + \normb{c_0}^2\right)T^*
\end{equation}
 Moreover, we have
 \begin{equation}\label{degiori est 4}
     \xi_{0,\epsilon}=\max \left\{\normif{\rho_{0,\epsilon}}, \normif{c_{0,\epsilon}} \right\} \le C \max \left\{\normif{\rho_0}, \normif{c_0} \right\}.
 \end{equation}
 
Combining \eqref{degiori est2} - \eqref{degiori est 4}, we are done.
\end{proof}

Henceforth, $\delta_0$ represents the constant from Lemma \ref{lem: rho-infty}. We now provide $L^2$-estimate of $( \rho_{\epsilon}, \nabla  c_{\epsilon},  u_{\epsilon},  \omega_{\epsilon})$ and $L^4$-estimate of $ u_{\epsilon}$, where $ \omega_{\epsilon}:=\nabla^{\perp}\cdot  u_{\epsilon}$ denotes the vorticity of fluids.
\begin{lemma}
If $\normif{c_0}\le \delta_0$, then
\begin{subequations}
\begin{align}
    \sup_{t\in[0,T^*]}\left( \normb{ \rho_{\epsilon}(t)}+\normb{\nabla  c_{\epsilon}(t)}+\norm{ u_{\epsilon}(t)}_{H^1}\right) &\le C, \label{est: L^2}\\
     \sup_{t\in[0,T^*]} \normd{ u_{\epsilon}(t)} &\le C, \label{est: u-L^4} \\
     \norm{\nabla  \rho_{\epsilon}}_{L^2([0,T^*];L^2)} + \norm{\nabla^2  c_{\epsilon}}_{L^2([0,T^*];L^2)} &\le C \label{est: nabla^2 c L^2}
\end{align}
\end{subequations}
for a constant $C=C(\norm{\rho_0}_{L^2\cap L^\infty},\norm{c_0}_{L^\infty \cap H^1},\norm{u_0}_{H^1},T^*)$.
\end{lemma}
\begin{proof}
Noticing Lemma \ref{lem: rho-infty}, we have
\begin{equation*}
    \frac{1}{2}\frac{d}{dt}\normb{ \rho_{\epsilon}}^2 + \normb{\nabla  \rho_{\epsilon}}^2 \le \normif{ \rho_{\epsilon}}\normb{\nabla  c_{\epsilon}}\normb{\nabla  \rho_{\epsilon}} \le C\normb{\nabla  c_{\epsilon}}^2 + \frac14 \normb{\nabla  \rho_{\epsilon}}^2.
\end{equation*}
for $C=C\left(\norm{\rho_0}_{L^2\cap L^\infty},\norm{c_0}_{L^2\cap L^\infty},T^*\right).$
Observing the equation of $\nabla  c_{\epsilon}$:
\begin{equation}\label{eq: nb c}
    \partial_{t} \nabla  c_{\epsilon} - \nabla \Delta  c_{\epsilon} = - u_{\epsilon}\cdot \nabla^2  c_{\epsilon} - \nabla  u_{\epsilon} \cdot \nabla  c_{\epsilon}
    -\nabla ( c_{\epsilon} \rho_{\epsilon}),
\end{equation}
we use \eqref{ine: sob-1}, the Calder\'on-Zygmund inequality: $\normb{\nabla  u_{\epsilon}} \lesssim \normb{ \omega_{\epsilon}}$, and \eqref{est: c} to obtain
\begin{align*}
    \frac{1}{2}\frac{d}{dt}\normb{\nabla  c_{\epsilon}}^2 + \normb{\nabla^2  c_{\epsilon}}^2 
    &\le \left | \int \left (\nabla  u_{\epsilon} \cdot \nabla  c_{\epsilon}\right  ) \nabla  c_{\epsilon} \right|
    + \left | \int \nabla ( c_{\epsilon} \rho_{\epsilon}) \nabla  c_{\epsilon} \right| \\
    &\le  \normb{\nabla  u_{\epsilon}}\normd{\nabla  c_{\epsilon}}^2+ \normb{ \rho_{\epsilon}}\normif{ c_{\epsilon}}\normb{\nabla^2  c_{\epsilon}} \\
    &\le C\left(\normb{ \omega_{\epsilon}}\normb{\nabla  c_{\epsilon}}\normb{\nabla^2  c_{\epsilon}} + \normb{ \rho_{\epsilon}}^2\normif{ c_{\epsilon}}^2\right) + \frac{1}{4}\normb{\nabla^2  c_{\epsilon}} \\
    &\le C\left(\normb{ \omega_{\epsilon}}^2\normb{\nabla  c_{\epsilon}}^2 + \normb{ \rho_{\epsilon}}^2\right) + \frac{1}{2}\normb{\nabla^2  c_{\epsilon}}
\end{align*}
for $C=C\left(\normif{c_0}\right).$
The equation of $ u_{\epsilon}$ gives
\begin{equation*}
    \frac{d}{dt} \normb{ u_{\epsilon}}^2 + \epsilon\normb{\nabla u_\epsilon}^2 \le C\left(\normb{ \rho_{\epsilon}}^2 + \normb{ u_{\epsilon}}^2\right).
\end{equation*}
Considering the vorticity equation:
\begin{equation}\label{eq: vorticity}
    \partial_{t}  \omega_{\epsilon} +  u_{\epsilon}\cdot \nabla  \omega_{\epsilon} = \epsilon\Delta \omega_\epsilon +\nabla^{\bot}  \rho_{\epsilon} \cdot \nabla \phi,
\end{equation}
we have
\begin{equation*}
    \frac{1}{2}\frac{d}{dt}\normb{ \omega_{\epsilon}}^2+\epsilon \normb{\nabla \omega_\epsilon}^2 \le C\normb{ \omega_{\epsilon}}^2 + \frac{1}{4} \normb{\nabla  \rho_{\epsilon}}^2.
\end{equation*}
Combining all, we arrive at
\begin{equation*}
\begin{split}
    \frac{d}{dt}\left(\normb{ \rho_{\epsilon}}^2 + \normb{\nabla  c_{\epsilon}}^2+ \normb{ u_{\epsilon}}^2 + \normb{ \omega_{\epsilon}}^2 \right)
    &+\normb{\nabla  \rho_{\epsilon}}^2 + \normb{\nabla^2  c_{\epsilon}}^2 \\
    &\le C\left(1+\normb{\nabla  c_{\epsilon}}^2 \right)\left(\normb{ \rho_{\epsilon}}^2 + \normb{\nabla  c_{\epsilon}}^2+ \normb{ u_{\epsilon}}^2 + \normb{ \omega_{\epsilon}}^2 \right)
\end{split}
\end{equation*}
for $C=C\left(\norm{\rho_0}_{L^2\cap L^\infty},\norm{c_0}_{L^2\cap L^\infty},T^*\right)$.
Using the Gr\"{o}nwall's inequality, we obtain
\begin{equation*}
    \begin{split}
        \sup_{t\in[0,T^*]}&\left(\normb{ \rho_{\epsilon}}^2 + \normb{\nabla  c_{\epsilon}}^2+ \normb{ u_{\epsilon}}^2 + \normb{ \omega_{\epsilon}}^2 \right)(t) + \int_0^{T^*} \left(\normb{\nabla  \rho_{\epsilon}}^2 + \normb{\nabla^2  c_{\epsilon}}^2 \right)(s)ds \\
    &\qquad\le C\exp\left(\int_{0}^{T^*}\left(1+\normb{\nabla  c_{\epsilon}}^2\right)(s)\,ds \right)\left(\normb{\rho_{0,\epsilon}}^2 + \normb{\nabla c_{0,\epsilon}}^2+ \normb{u_{0,\epsilon}}^2 + \normb{\omega_{0,\epsilon}}^2 \right) \\
     &\qquad\le C\exp\left(\int_{0}^{T^*}\left(1+\normb{\nabla  c_{\epsilon}}^2\right)(s)\,ds \right)\left(\normb{\rho_0}^2 + \normb{\nabla c_0}^2+ \normb{u_0}^2 + \normb{\omega_0}^2 \right).
    \end{split}
\end{equation*}
Hence \eqref{est: c-1-2-2} and $\normb{\nabla  u_{\epsilon}} \lesssim \normb{ \omega_{\epsilon}}$  yield \eqref{est: L^2} and \eqref{est: nabla^2 c L^2} .
Recalling \eqref{ine: sob-1}, \eqref{est: u-L^4} follows from 
\begin{equation*}
    \norm{ u_{\epsilon}}_{L^4} \lesssim \normb{ u_{\epsilon}}^{\frac12}\normb{\nabla  u_{\epsilon}}^{\frac12} \lesssim \normb{ u_{\epsilon}}^{\frac12}\normb{ \omega_{\epsilon}}^{\frac12}. 
\end{equation*}
\end{proof}

With the above lemma at hand, we present $L^q$-estimate of $(\nabla  \rho_{\epsilon}, \nabla^2  c_{\epsilon}, \nabla  u_{\epsilon})$ with $q>2$ and $L^\infty$-estimate of $\nabla  c_{\epsilon}$.
\begin{lemma}
If $\normif{c_0}\le \delta_0$, then
\begin{subequations}
\begin{align}
    \sup_{t\in[0,T^*]}\left( \normq{\nabla \rho_{\epsilon}(t)}+\normq{\nabla^2  c_{\epsilon}(t)}+\normq{\nabla  u_{\epsilon}(t)}\right) &\le C
    \quad \text{for}\;\,q\in(2,\infty), \label{est: q}\\
    \sup_{t\in[0,T^*]}\normif{\nabla  c_{\epsilon}} &\le C \label{est: infty-1}
\end{align}
\end{subequations}
for a constant $C=C(q,\norm{\rho_0}_{L^2\cap W^{1,q}},\norm{c_0}_{H^1 \cap W^{2,q}},\norm{u_0}_{H^1\cap W^{1,q}},T^*)$.
\end{lemma}
\begin{proof}
We firstly consider the $\nabla  \rho_{\epsilon}$ equation:
\begin{equation*}
    \partial_{t} \nabla  \rho_{\epsilon} - \nabla \Delta  \rho_{\epsilon} = - u_{\epsilon}\cdot \nabla^2  \rho_{\epsilon} - \nabla  u_{\epsilon} \cdot \nabla  \rho_{\epsilon}
    -\nabla \left ( \nabla \cdot( \rho_{\epsilon} \nabla  c_{\epsilon})\right  ).
\end{equation*}
Taking the $L^2$ inner product of this equations with $\nabla  \rho_{\epsilon} |\nabla  \rho_{\epsilon}|^{q-2}$, we have
\begin{align*}
    \frac{1}{q} \frac{d}{dt} & \normq{\nabla  \rho_{\epsilon}}^q + \int |\nabla^2  \rho_{\epsilon}|^2|\nabla  \rho_{\epsilon}|^{q-2} + \frac{q-2}{4} \int |\nabla|\nabla  \rho_{\epsilon}|^2|^2|\nabla  \rho_{\epsilon}|^{q-4} \\
    &= - \int \left ( u_{\epsilon} \cdot \nabla^2  \rho_{\epsilon}\right  ) \cdot \nabla  \rho_{\epsilon} |\nabla  \rho_{\epsilon}|^{q-2} - \int \left ( \nabla  u_{\epsilon} \cdot \nabla  \rho_{\epsilon}\right  ) \cdot \nabla  \rho_{\epsilon} |\nabla  \rho_{\epsilon}|^{q-2} \\
    &\quad - \int \nabla \left ( \nabla \cdot\left ( \rho_{\epsilon} \nabla  c_{\epsilon}\right  )\right  ) \cdot \nabla  \rho_{\epsilon} |\nabla  \rho_{\epsilon}|^{q-2} \\
    &=: \RN{1}_1 + \RN{1}_2 +\RN{1}_3.
\end{align*}
Using the integration by parts, $\nabla \cdot  u_{\epsilon}=0$, and Young's inequality, we obtain
\begin{equation*}
    \begin{split}
    \RN{1}_1  &= 0,\\
    \RN{1}_2 &= \int  u_{\epsilon} \cdot \nabla  \rho_{\epsilon} \Delta  \rho_{\epsilon} |\nabla  \rho_{\epsilon}|^{q-2} 
    + \left (q-2\right  ) \int \left ( u_{\epsilon} \cdot \nabla  \rho_{\epsilon}\right  )\left (\nabla  \rho_{\epsilon} \cdot \nabla^2  \rho_{\epsilon} \cdot \nabla  \rho_{\epsilon}\right  )|\nabla  \rho_{\epsilon}|^{q-4} \\
    &\le (q-1) \int | u_{\epsilon}||\nabla^2  \rho_{\epsilon}||\nabla  \rho_{\epsilon}|^{q-1} \\
    &\le C(q) \int | u_{\epsilon}|^2|\nabla  \rho_{\epsilon}|^q + \frac{1}{4} \int |\nabla^2  \rho_{\epsilon}|^2|\nabla  \rho_{\epsilon}|^{q-2} \\
    &\le C(q) \normif{ u_{\epsilon}}^2 \normq{\nabla  \rho_{\epsilon}}^q + \frac{1}{4} \int |\nabla^2  \rho_{\epsilon}|^2|\nabla  \rho_{\epsilon}|^{q-2},\\
    \RN{1}_3 &= \int \nabla \cdot \left ( \rho_{\epsilon} \nabla  c_{\epsilon}\right  ) \cdot \Delta  \rho_{\epsilon} |\nabla  \rho_{\epsilon}|^{q-2} 
    + \left (q-2\right  ) \int \nabla \cdot \left ( \rho_{\epsilon} \nabla  c_{\epsilon}\right  )\left (\nabla  \rho_{\epsilon} \cdot \nabla^2  \rho_{\epsilon} \cdot \nabla  \rho_{\epsilon}\right  )|\nabla  \rho_{\epsilon}|^{q-4} \\
    &\le \left (q-1\right)  \left(\int |\nabla^2  \rho_{\epsilon}||\nabla  c_{\epsilon}||\nabla  \rho_{\epsilon}|^{q-1} 
    + \int | \rho_{\epsilon}| |\nabla^2  \rho_{\epsilon}||\nabla^2  c_{\epsilon}||\nabla  \rho_{\epsilon}|^{q-2} \right) \\
    &\le  C(q) \left (\int |\nabla  c_{\epsilon}|^2|\nabla  \rho_{\epsilon}|^q + \int | \rho_{\epsilon}|^2|\nabla^2  c_{\epsilon}|^2|\nabla  \rho_{\epsilon}|^{q-2} \right  )+ \frac{1}{2} \int |\nabla^2  \rho_{\epsilon}|^2|\nabla  \rho_{\epsilon}|^{q-2}\\
    &\le C(q)\left (\normif{\nabla  c_{\epsilon}}^2 \normq{\nabla  \rho_{\epsilon}}^q + \normif{ \rho_{\epsilon}}^2\normq{\nabla^2  c_{\epsilon}}^2\normq{\nabla  \rho_{\epsilon}}^{q-2}\right  )+ \frac{1}{2} \int |\nabla^2  \rho_{\epsilon}|^2|\nabla  \rho_{\epsilon}|^{q-2}\\
    &\le C(q)\left(\normif{ \rho_{\epsilon}}^2 + \normif{\nabla  c_{\epsilon}}^2\right) \left(\normq{\nabla  \rho_{\epsilon}}^q + \normq{\nabla^2  c_{\epsilon}}^q \right)  + \frac{1}{2} \int |\nabla^2  \rho_{\epsilon}|^2|\nabla  \rho_{\epsilon}|^{q-2}.
    \end{split}
\end{equation*}
Thus, using \eqref{est: rho-infty}, we obtain
\begin{equation}\label{est: nb rho q}
   \frac{d}{dt}\normq{\nabla  \rho_{\epsilon}}^q 
    \le C \left (1 + \normif{\nabla  c_{\epsilon}}^2 + \normif{ u_{\epsilon}}^2\right) \left (\normq{\nabla  \rho_{\epsilon}}^q + \normq{\nabla^2  c_{\epsilon}}^q  \right)
\end{equation}
for $C=C\left(q,\norm{\rho_0}_{L^2\cap L^\infty},\norm{c_0}_{L^2\cap L^\infty},T^*\right).$

Next, taking the $L^2$ inner product of \eqref{eq: nb c} with $\nabla  c_{\epsilon} |\nabla  c_{\epsilon}|^{q-2}$ and similarly proceeding as in the above estimate, we have
\begin{align*}
    \frac{1}{q}\frac{d}{dt} & \normq{\nabla  c_{\epsilon}}^q + \int |\nabla^2  c_{\epsilon}|^2|\nabla  c_{\epsilon}|^{q-2} + \frac{q-2}{4} \int |\nabla|\nabla  c_{\epsilon}|^2|^2|\nabla  c_{\epsilon}|^{q-4} \\
    &=-\int\left(\nabla  u_{\epsilon} \cdot \nabla  c_{\epsilon} \right)\cdot\nabla  c_{\epsilon} |\nabla  c_{\epsilon}|^{q-2} - \int \nabla \left ( c_{\epsilon} \rho_{\epsilon}\right  ) \cdot \nabla  c_{\epsilon} |\nabla  c_{\epsilon}|^{q-2} \\
    &\le C(q) \normif{ u_{\epsilon}}^2 \normq{\nabla  c_{\epsilon}}^q + \frac{1}{4} \int |\nabla^2  c_{\epsilon}|^2|\nabla  c_{\epsilon}|^{q-2} \\
    &\qquad + \int  c_{\epsilon} \rho_{\epsilon} \Delta  c_{\epsilon} |\nabla  c_{\epsilon}|^{q-2} + \left (q-2\right  )\int  c_{\epsilon} \rho_{\epsilon} \left (\nabla  c_{\epsilon} \cdot \nabla^2  c_{\epsilon} \cdot \nabla  c_{\epsilon}\right  )|\nabla  c_{\epsilon}|^{q-4} \\
     &\le C(q) \normif{ u_{\epsilon}}^2 \normq{\nabla  c_{\epsilon}}^q + \frac{1}{2} \int |\nabla^2  c_{\epsilon}|^2|\nabla  c_{\epsilon}|^{q-2} + C(q) \int | c_{\epsilon}|^2| \rho_{\epsilon}|^2|\nabla  c_{\epsilon}|^{q-2} \\
     &\le C(q)\left (\normif{ u_{\epsilon}}^2 \normq{\nabla  c_{\epsilon}}^q + \normif{ \rho_{\epsilon}}^2\normq{ c_{\epsilon}}^2\normq{\nabla  c_{\epsilon}}^{q-2}\right  ) +\frac{1}{2} \int |\nabla^2  c_{\epsilon}|^2|\nabla  c_{\epsilon}|^{q-2} \\
     &\le C(q) \left(\left(\normif{ \rho_{\epsilon}}^2 + \normif{ u_{\epsilon}}^2\right)\normq{\nabla  c_{\epsilon}}^q + \normif{ \rho_{\epsilon}}^2\normq{ c_{\epsilon}}^q\right)+\frac{1}{2} \int |\nabla^2  c_{\epsilon}|^2|\nabla  c_{\epsilon}|^{q-2}.
\end{align*}
Thus, using \eqref{est: c} and \eqref{est: rho-infty}, we obtain
\begin{align}\label{est: nb c-q}
    \frac{d}{dt}\normq{\nabla  c_{\epsilon}}^q &\le C\left (1+\left (1+\normif{ u_{\epsilon}}^2\right  )\normq{\nabla  c_{\epsilon}}^q \right)
\end{align}
for $C=C\left(q,\norm{\rho_0}_{L^2 \cap L^\infty},\norm{c_0}_{L^2 \cap L^\infty},T^*\right).$

We now consider the $\nabla^2  c_{\epsilon}$ equation:
\begin{equation*}
    \partial_{t} \nabla^2  c_{\epsilon} - \nabla^2 \Delta  c_{\epsilon} = - \nabla^2 \left ( u_{\epsilon} \cdot \nabla  c_{\epsilon}\right  )
    -\nabla^2 \left (c \rho_{\epsilon}\right  ).
\end{equation*}
Taking the $L^2$ inner product of this equations with $\nabla^2  c_{\epsilon} |\nabla^2  c_{\epsilon}|^{q-2}$, we have
\begin{align*}
    \frac{1}{q}\frac{d}{dt} & \normq{\nabla^2  c_{\epsilon}}^q + \int |\nabla^3  c_{\epsilon}|^2|\nabla^2  c_{\epsilon}|^{q-2} + \frac{q-2}{4} \int |\nabla|\nabla^2  c_{\epsilon}|^2|^2|\nabla^2  c_{\epsilon}|^{q-4} \\
    &= - \int \nabla^2 \left ( u_{\epsilon} \cdot \nabla  c_{\epsilon}\right  ) \cdot \nabla^2  c_{\epsilon} |\nabla^2  c_{\epsilon}|^{q-2} - \int \nabla^2 \left ( c_{\epsilon} \rho_{\epsilon}\right  ) \cdot \nabla^2  c_{\epsilon} |\nabla^2  c_{\epsilon}|^{q-2} \\
    &=: \RN{1}_4 + \RN{1}_5.
\end{align*}
The integration by parts, $\nabla \cdot  u_{\epsilon}=0$, and Young's inequality again give us
\begin{align*}
    \RN{1}_4 &\le (q-1) \left ( \int |\nabla  u_{\epsilon}||\nabla  c_{\epsilon}||\nabla^3  c_{\epsilon}||\nabla^2  c_{\epsilon}|^{q-2} 
    + \int | u_{\epsilon}||\nabla^2  c_{\epsilon}|^{q-1} |\nabla^3  c_{\epsilon}|\right ) \\
    &\le C(q) \left ( \int |\nabla  u_{\epsilon}|^2|\nabla  c_{\epsilon}|^2|\nabla^2  c_{\epsilon}|^{q-2} 
    + \int | u_{\epsilon}|^2|\nabla^2  c_{\epsilon}|^{q} \right ) + \frac{1}{2} \int |\nabla^3  c_{\epsilon}|^2 |\nabla^2  c_{\epsilon}|^{q-2} \\
    &\le C(q) \left ( \normif{\nabla  c_{\epsilon}}^2 \normq{\nabla  u_{\epsilon}}^2 \normq{\nabla  c_{\epsilon}}^{q-2} +\normif{ u_{\epsilon}}^2\normq{\nabla^2  c_{\epsilon}}^q  \right ) + \frac{1}{2} \int |\nabla^3  c_{\epsilon}|^2 |\nabla^2  c_{\epsilon}|^{q-2} \\
    &\le C(q) \left(\normif{\nabla  c_{\epsilon}}^2 +\normif{ u_{\epsilon}}^2\right)\left(\normq{\nabla^2  c_{\epsilon}}^q + \normq{\nabla  u_{\epsilon}}^q  \right)+ \frac{1}{2} \int |\nabla^3  c_{\epsilon}|^2 |\nabla^2  c_{\epsilon}|^{q-2}, \\
    \RN{1}_5 &\le (q-1) \left(\int | \rho_{\epsilon}||\nabla  c_{\epsilon}||\nabla^3  c_{\epsilon}||\nabla^2  c_{\epsilon}|^{q-2} + \int | c_{\epsilon}||\nabla  \rho_{\epsilon} ||\nabla^3  c_{\epsilon}||\nabla^2  c_{\epsilon}|^{q-2}\right) \\
    &\le C(q) \left ( \int | \rho_{\epsilon}|^2|\nabla  c_{\epsilon}|^2|\nabla^2  c_{\epsilon}|^{q-2} + \int | c_{\epsilon}|^2|\nabla  \rho_{\epsilon}|^2|\nabla^2  c_{\epsilon}|^{q-2}\right ) + \frac{1}{2} \int |\nabla^3  c_{\epsilon}|^2|\nabla^2  c_{\epsilon}|^{q-2} \\
    &\le C(q) \left( \normif{ \rho_{\epsilon}}^2 \normq{\nabla  c_{\epsilon}}^2 \normq{\nabla^2  c_{\epsilon}}^{q-2} + \normif{ c_{\epsilon}}^2 \normq{\nabla  \rho_{\epsilon}}^2 \normq{\nabla^2  c_{\epsilon}}^{q-2} \right) +\frac{1}{2} \int |\nabla^3  c_{\epsilon}|^2|\nabla^2  c_{\epsilon}|^{q-2}\\
    &\le C(q)\left(\normif{ \rho_{\epsilon}}^2 + \normif{ c_{\epsilon}}^2 \right)\left (\normq{\nabla  \rho_{\epsilon}}^q +\normq{\nabla  c_{\epsilon}}^q + \normq{\nabla^2  c_{\epsilon}}^q  \right)+\frac{1}{2} \int |\nabla^3  c_{\epsilon}|^2|\nabla^2  c_{\epsilon}|^{q-2}.
\end{align*}
Thus, using \eqref{est: c} and \eqref{est: rho-infty}, we obtain
\begin{equation}\label{est: nb^2 c-q}
    \frac{d}{dt}\normq{\nabla^2  c_{\epsilon}}^q 
    \le C \left (1 + \normif{\nabla  c_{\epsilon}}^2+\normif{ u_{\epsilon}}^2\right)\left (\normq{\nabla  u_{\epsilon}}^q + \normq{\nabla  \rho_{\epsilon}}^q +\normq{\nabla  c_{\epsilon}}^q + \normq{\nabla^2  c_{\epsilon}}^q  \right)
\end{equation}
for $C=C\left(q,\norm{\rho_0}_{L^2\cap L^\infty},\norm{c_0}_{L^2\cap L^\infty},T^*\right).$
After taking $L^2$ inner product of the vorticity equation \eqref{eq: vorticity} with $ \omega_{\epsilon} | \omega_{\epsilon}|^{q-2}$, we obtain
\begin{equation}\label{est: omega-q}
    \frac{d}{dt}\normq{ \omega_{\epsilon}}^q + \epsilon \int |\nabla \omega_\epsilon|^2|\omega_\epsilon|^{q-2} + \frac{\epsilon(q-2)}{4}\int |\nabla |\omega_\epsilon|^2 |^2 |\omega_\epsilon|^{q-4}\le C \left(\normq{ \omega_{\epsilon}}^q + \normq{\nabla  \rho_{\epsilon}}^q\right)
\end{equation}
for $C=C\left(q\right).$
Summing from \eqref{est: nb rho q} to \eqref{est: omega-q} and recalling the Calder\'on-Zygmund inequality: $\normq{\nabla  u_{\epsilon}}\lesssim \normq{ \omega_{\epsilon}}$, we have 
\begin{equation*}
    \frac{d}{dt}Y_\epsilon \le C\left (\normif{\nabla  c_{\epsilon}}^2+\normif{ u_{\epsilon}}^2\right)Y_\epsilon,
\end{equation*}
for $C=C\left(q,\norm{\rho_0}_{L^2\cap  L^\infty},\norm{c_0}_{L^2 \cap L^\infty},T^*\right),$
where $Y_\epsilon$ is defined by
\begin{equation*}
    Y_\epsilon(t):=\left( \normq{\nabla  \rho_{\epsilon}}^q +\normq{\nabla  c_{\epsilon}}^q + \normq{\nabla^2  c_{\epsilon}}^q+\normq{ \omega_{\epsilon}}^q \right)(t).
\end{equation*}
Recalling the Br\'ezis-Wainger inequality \eqref{ine: B-W},
we estimate
\begin{align*}
   \normif{\nabla  c_{\epsilon}}^2+ \normif{ u_{\epsilon}}^2 
    &\lesssim\left( 1 + \normb{\nabla  u_{\epsilon}} + \normb{\nabla^2  c_{\epsilon}}\right)^2 \left( 1+\log_+\left(\normq{\nabla^2  c_{\epsilon}} + \normq{\nabla  u_{\epsilon}} \right)\right)  + \left(\normb{ u_{\epsilon}} + \normb{\nabla  c_{\epsilon}}\right)^2 \\
    &\le C \left(1+ \normb{\nabla^2  c_{\epsilon}}^2\right)\left( 1+ \log_+\left(\normq{\nabla^2  c_{\epsilon}} + \normq{ \omega_{\epsilon}} \right)\right)
\end{align*}
for a constant $C=C(\norm{\rho_0}_{L^2\cap L^\infty},\norm{c_0}_{L^\infty \cap H^1},\norm{u_0}_{H^1},T^*)$,
where we have used \eqref{est: L^2} in the last line.
Therefore, we arrive at
\begin{equation*}
    \frac{d}{dt} Y_\epsilon \le C\left(1+ \normb{\nabla^2  c_{\epsilon}}^2\right) \left(1 +\log (1+Y_\epsilon)\right) Y_\epsilon,
\end{equation*}
which yields
\begin{equation*}
    \frac{d}{dt} \left(1 +\log (1+Y_\epsilon)\right) \le C\left(1+ \normb{\nabla^2  c_{\epsilon}}^2\right) \left(1 +\log (1+Y_\epsilon)\right).
\end{equation*}
for a constant $C=C(q,\norm{\rho_0}_{L^2\cap L^\infty},\norm{c_0}_{L^\infty \cap H^1},\norm{u_0}_{H^1},T^*)$.
Applying the Gr\"{o}nwall's inequality and recalling \eqref{est: nabla^2 c L^2}, $\normq{\nabla  u_{\epsilon}}\lesssim \normq{ \omega_{\epsilon}}$, and \begin{equation*}
    Y_\epsilon(0) \le C \left( \normq{\nabla \rho_{0}}^q +\normq{\nabla c_{0}}^q + \normq{\nabla^2 c_{0}}^q+\normq{\omega_{0}}^q \right),
\end{equation*} we can obtain \eqref{est: q}.
\eqref{est: infty-1} follows from the Gagliardo-Nirenberg interpolation inequality:
\begin{equation*}
    \normifr{f} \lesssim\normqr{f}^{\frac{q-2}{2q-2}}\normqr{\nabla f}^{\frac{q}{2q-2}}\;\, \text{with} \;\, q>2.
\end{equation*}
\end{proof}

We finally provide $L^{\infty}$-estimate of $(\nabla  \rho_{\epsilon}, \nabla^2  c_{\epsilon},  \omega_{\epsilon})$. 

\begin{lemma}
If $\normif{c_0}\le \delta_0$, then 
\begin{equation}\label{est: W1,infty}
    \sup_{t\in[0,T^*]}\left( \normif{\nabla \rho_{\epsilon}(t)}+\normif{\nabla^2  c_{\epsilon}(t)}+\normif{ \omega_{\epsilon}(t)}\right) \le C
\end{equation}
for a constant $C=C(T_{loc},T^*,\norm{\rho_0}_{H^m},\norm{c_0}_{H^{m+1}},\norm{u_0}_{H^{m+1}})$ with $m>3$, where $T_{loc}$ is from \eqref{definition of Tloc}.  
\end{lemma}
\begin{proof}
We first show that there exists a constant $C=C(T_{loc},T^*,\norm{\rho_0}_{H^m},\norm{c_0}_{H^{m+1}},\norm{u_0}_{H^{m+1}})$ such that
\begin{equation}\label{est: nb rho, nb^c-infty}
   \normif{\nabla  \rho_{\epsilon}(t)} + \normif{\nabla^2  c_{\epsilon}(t)} \le C  \quad\text{for} \;\, t\in\left[\frac{T_{loc}}{2},T^*\right].
\end{equation}
To obtain \eqref{est: nb rho, nb^c-infty}, we consider integral forms of $ \rho_{\epsilon}$ and $c_\epsilon$ given by the Duhamel's principle: 
\begin{equation*}
    \begin{split}
    & \rho_{\epsilon}(t)=e^{t\Delta}\rho_{0,\epsilon} - \int_{0}^{t} \nabla e^{(t-s)\Delta} ( \rho_{\epsilon}(s)\nabla  c_{\epsilon}(s)) \; ds - \int_{0}^{t} \nabla e^{(t-s)\Delta} ( u_{\epsilon}(s) \rho_{\epsilon}(s)) \; ds , \\
    & c_{\epsilon}(t)=e^{t\Delta}c_{0,\epsilon} - \int_{0}^{t} e^{(t-s)\Delta} ( \rho_{\epsilon}(s)  c_{\epsilon}(s)) \; ds - \int_{0}^{t} \nabla e^{(t-s)\Delta} ( u_{\epsilon}(s)  c_{\epsilon}(s)) \; ds.
    \end{split}
\end{equation*}
We find the following heat kernel estimate useful:
\begin{equation}\label{est: 1}
    \normqr{\nabla^{\alpha} e^{\nu t\Delta}f} \lesssim\frac{\normrr{f}}{(\nu t)^{\frac{1}{r}-\frac{1}{q}+\frac{|\alpha|}{2}}} \quad \text{for} \;\, 1\le r \le q \le \infty.
\end{equation}
Note that our previous lemmas imply
\begin{equation*}
    \sup_{t\in [0,T^*]}\left( \normif{ \rho_{\epsilon}} + \normif{ c_{\epsilon}} + \normif{\nabla  c_{\epsilon}} + \normb{ u_{\epsilon}} + \normd{ u_{\epsilon}} + \normd{\nabla  u_{\epsilon}} \right)(t)\le M
\end{equation*}
for a constant $M=M(\norm{\rho_0}_{H^m},\norm{c_0}_{H^{m+1}},\norm{u_0}_{H^{m+1}},T^*)$.
Let $\delta \in (0, \frac{1}{2}]$ be a constant to be chosen later. 
In the following estimates, $C(M)$'s denote constants dependent on $M$ but independent of $\delta$.

For the estimate of $\normif{\nabla  \rho_{\epsilon}}$, we note that
\begin{align*}
    \normif{\nabla  \rho_{\epsilon}(t)} 
    &\le  \normif{\nabla e^{t\Delta}\rho_{0,\epsilon}} + \int_{0}^{(1-\delta)t} \normif{\nabla^2 e^{(t-s)\Delta}  \rho_{\epsilon}(s) \nabla  c_{\epsilon}(s))} ds \\
    &\quad + \int_{(1-\delta)t}^{t} \normif{\nabla e^{(t-s)\Delta} \nabla ( \rho_{\epsilon}(s)\nabla  c_{\epsilon}(s))} ds 
    + \int_{0}^{(1-\delta)t} \normif{\nabla^2 e^{(t-s)\Delta} ( u_{\epsilon}(s)  \rho_{\epsilon}(s))} ds \\
    &\quad + \int_{(1-\delta)t}^{t} \normif{\nabla e^{(t-s)\Delta} ( u_{\epsilon}(s) \nabla  \rho_{\epsilon}(s))}  ds \\
    &=: \RN{2}_1 +\RN{2}_2 +\RN{2}_3 +\RN{2}_4 +\RN{2}_5.
\end{align*}
It is clear from \eqref{est: 1} that
\begin{equation*}
    \RN{2}_1 \le \frac{C\normif{\rho_{0,\epsilon}}}{t^\frac{1}{2}} \le  \frac{C\normif{\rho_0}}{t^\frac{1}{2}}.
\end{equation*}
Moreover, we estimate
\begin{equation*}
    \begin{split}
        \RN{2}_2 &\le C \int_{0}^{(1-\delta)t} \frac{\normif{ \rho_{\epsilon}(s)}\normif{\nabla  c_{\epsilon}(s)}}{t-s}\,ds
    \le C(M)\int_{0}^{(1-\delta)t} \frac{1}{t-s}\,ds
    \le\frac{C(M)}{\delta t}, \\
    \RN{2}_3 &\le C \int_{(1-\delta)t}^{t} \frac{\normif{\nabla  \rho_{\epsilon}(s)}\normif{\nabla  c_{\epsilon}(s)} + \normif{ \rho_{\epsilon} (s)}\normif{\nabla^2  c_{\epsilon}(s)}}{(t-s)^\frac{1}{2}}\,ds \\
    & \le C(M) \sup_{\frac{t}{2}\le s \le t}\left( \normif{\nabla  \rho_{\epsilon}(s)} + \normif{\nabla^2  c_{\epsilon}(s)}\right)\int_{(1-\delta)t}^{t} \frac{1}{(t-s)^\frac{1}{2}}\,ds  \\
    & \le C(M) (\delta t)^{\frac{1}{2}}\sup_{\frac{t}{2}\le s \le t}\left( \normif{\nabla  \rho_{\epsilon}(s)} + \normif{\nabla^2  c_{\epsilon}(s)}\right), \\
    \RN{2}_4 &\le C \int_{0}^{(1-\delta)t} \frac{\normb{ u_{\epsilon}(s)}\normif{ \rho_{\epsilon}(s)}}{(t-s)^{\frac{3}{2}}}\,ds
    \le C(M) \int_{0}^{(1-\delta)t} \frac{1}{(t-s)^\frac{3}{2}}\,ds
    \le\frac{C(M)}{(\delta t)^{\frac{3}{2}}}, \\
    \RN{2}_5&\le C \int_{(1-\delta)t}^{t} \frac{\normd{ u_{\epsilon}(s)}\normif{\nabla  \rho_{\epsilon}(s)}}{(t-s)^\frac{3}{4}}\,ds  \le C(M) (\delta t)^{\frac{1}{4}} \sup_{\frac{t}{2}\le s \le t}  \normif{\nabla  \rho_{\epsilon}(s)}.
    \end{split}
\end{equation*}

Next, we note that
\begin{align*}
    \normif{\nabla^2  c_{\epsilon}(t)} 
    &\le  \normif{\nabla^2 e^{t\Delta} c_{0,\epsilon}} + \int_{0}^{(1-\delta)t} \normif{\nabla^2 e^{(t-s)\Delta} ( \rho_{\epsilon}(s)  c_{\epsilon}(s))} ds \\
    &\quad + \int_{(1-\delta)t}^{t} \normif{\nabla e^{(t-s)\Delta} \nabla ( \rho_{\epsilon}(s)  c_{\epsilon}(s))} ds 
    + \int_{0}^{(1-\delta)t} \normif{\nabla^2 e^{(t-s)\Delta} ( u_{\epsilon}(s) \nabla  c_{\epsilon}(s))} ds \\
    &\quad + \int_{(1-\delta)t}^{t} \normif{\nabla e^{(t-s)\Delta} \nabla ( u_{\epsilon}(s) \nabla  c_{\epsilon}(s))}  ds \\
    &=: \RN{3}_1 +\RN{3}_2 +\RN{3}_3 +\RN{3}_4 +\RN{3}_5.
\end{align*}
It is clear from \eqref{est: 1} that
\begin{equation*}
    \RN{3}_1 \le \frac{C\normif{c_{0,\epsilon}}}{t}\le\frac{C\normif{c_0}}{t}.
\end{equation*}
We further estimate
\begin{equation*}
    \begin{split}
        \RN{3}_2 &\le C \int_{0}^{(1-\delta)t} \frac{\normif{ c_{\epsilon}(s)}\normif{ \rho_{\epsilon}(s)}}{t-s}\,ds \le \frac{C(M)}{\delta t}, \\
         \RN{3}_3 & \le C \int_{(1-\delta)t}^{t} \frac{\normif{ \rho_{\epsilon}(s)}\normif{\nabla  c_{\epsilon}(s)} + \normif{\nabla  \rho_{\epsilon} (s)}\normif{ c_{\epsilon}(s)}}{(t-s)^\frac{1}{2}}\,ds \\
    & \le C(M) \int_{(1-\delta)t}^{t} \frac{1 + \normif{\nabla  \rho_{\epsilon} (s)}}{(t-s)^\frac{1}{2}}\,ds \\
    & \le C(M) (\delta t)^{\frac{1}{2}} \left( 1 + \sup_{\frac{t}{2}\le s \le t} \normif{\nabla  \rho_{\epsilon}(s)}\right), \\
    \RN{3}_4 &\le C \int_{0}^{(1-\delta)t} \frac{\normb{ u_{\epsilon}(s)}\normif{\nabla  c_{\epsilon}(s)}}{(t-s)^{\frac{3}{2}}}
    \le \frac{C(M)}{(\delta t)^{\frac{3}{2}}},\\
    \RN{3}_5&\le C \int_{(1-\delta)t}^{t} \frac{\normd{\nabla  u_{\epsilon}(s)}\normif{\nabla  c_{\epsilon}(s)} + \normd{ u_{\epsilon}(s)}\normif{\nabla^2  c_{\epsilon}(s)}}{(t-s)^\frac{3}{4}}\,ds \\
    &\le C(M) (\delta t)^{\frac{1}{4}}  \left( 1 + \sup_{\frac{t}{2}\le s \le t}  \normif{\nabla^2  c_{\epsilon}(s)}\right).
    \end{split}
\end{equation*}

Combining all and noticing
\begin{equation*}
    \sup_{\frac{t}{2}\le s \le t}\left( \normif{\nabla  \rho_{\epsilon}(s)} + \normif{\nabla^2  c_{\epsilon}(s)}\right)
    \le \sup_{\frac{T_{loc}}{2}\le s < T^*}\left( \normif{\nabla  \rho_{\epsilon}(s)} + \normif{\nabla^2  c_{\epsilon}(s)}\right)
\end{equation*}
for $t\in[T_{loc},T^*)$,
we have
\begin{equation*}
    \normif{\nabla  \rho_{\epsilon}(t)}+\normif{\nabla^2  c_{\epsilon}(t)}\le C(M)\left((\delta T_{loc})^{-\frac{3}{2}}  + (\delta T^*)^{\frac14}    \sup_{\frac{T_{loc}}{2}\le s < T^*}\left( \normif{\nabla  \rho_{\epsilon}(s)} + \normif{\nabla^2  c_{\epsilon}(s)}\right)\right)
\end{equation*}
for $t\in[T_{loc},T^*)$.
Choosing 
\begin{equation*}
    \delta= \min \left\{\frac12, \,\frac{1}{16C(M)^4 T^*} \right\},
\end{equation*}  
we arrive at
\begin{equation*}
\begin{split}
    \sup_{\frac{T_{loc}}{2}\le s < T^*}\left( \normif{\nabla  \rho_{\epsilon}(s)} + \normif{\nabla^2  c_{\epsilon}(s)}\right)
    &\le \sup_{\frac{T_{loc}}{2}\le s < T_{loc}}\left( \normif{\nabla  \rho_{\epsilon}(s)} + \normif{\nabla^2  c_{\epsilon}(s)}\right) \\
    &\quad+  C(M,T_{loc},T^*)  + \frac{1}{2}\sup_{\frac{T_{loc}}{2}\le s < T^*}\left( \normif{\nabla  \rho_{\epsilon}(s)} + \normif{\nabla^2  c_{\epsilon}(s)}\right).
\end{split}
\end{equation*}
As we observed in \eqref{bound of X}, we have
\begin{equation*}
    \sup_{\frac{T_{loc}}{2}\le s < T_{loc}}\left( \normif{\nabla  \rho_{\epsilon}(s)} + \normif{\nabla^2  c_{\epsilon}(s)}\right) \le C=C(T_{loc},\norm{\rho_0}_{H^m},\norm{c_0}_{H^{m+1}},\norm{u_0}_{H^{m+1}}),
\end{equation*} 
which implies \eqref{est: nb rho, nb^c-infty}.

Next, along the particle-trajectory map $\Psi_\epsilon\left (\cdot,t\right):\mathbb{R}^2 \rightarrow \mathbb{R}^2$ (\cite{Majda}), the vorticity equation \eqref{eq: vorticity} gives
\begin{equation*}
    \partial_{t}  \omega_{\epsilon} \left(\Psi_\epsilon(x,t),t\right) = \epsilon \Delta \omega_\epsilon  \left(\Psi_\epsilon(x,t),t\right)+\left(\nabla^{\bot}  \rho_{\epsilon} \cdot \nabla \phi \right)\left(\Psi_\epsilon(x,t),t\right),
\end{equation*}
and therefore, the bijectivity of $\Psi_\epsilon\left(\cdot,t\right)$ and  the Duhamel principle and 
\eqref{est: nb rho, nb^c-infty} imply that
\begin{equation*}
    \normif{ \omega_{\epsilon}(t)} \le \normif{e^{\epsilon t\Delta}\omega_{0,\epsilon}}+\int_{0}^{t}  \normif{e^{\epsilon t\Delta}(\nabla \rho_{\epsilon}(s)\cdot\nabla \phi)}ds
\end{equation*}
on $[0,T^*]$. 
Hence we use \eqref{est: nb rho, nb^c-infty} and \eqref{est: 1} with $\normif{\omega_{0,\epsilon}}\le C\normif{\omega_0}$ to obtain an upper bound $C=C(T_{loc},T^*,\norm{\rho_0}_{H^m},\norm{c_0}_{H^{m+1}},\norm{u_0}_{H^{m+1}})$ of $\sup_{t\in[0,T^*]}\normif{\omega_\epsilon(t)}$.
\end{proof}
Now we are ready to show Theorem \ref{thm: gwp}.
Considering \eqref{est: c}, \eqref{est: rho-infty}, \eqref{est: q}, and \eqref{est: infty-1}, 
\eqref{est: local} becomes
\begin{equation*}
    \frac{d}{dt}X_\epsilon+\normh{\nabla  \rho_{\epsilon}}^2 + \normhh{\nabla  c_{\epsilon}}^2
        \le C \left (1+\normif{\nabla  u_{\epsilon}}\right) X_\epsilon
\end{equation*}
for a constant $C=C(T_{loc},T^*,\norm{\rho_0}_{H^m},\norm{c_0}_{H^{m+1}},\norm{u_0}_{H^{m+1}})$.
Plugging into this an elementary estimate in $\mathbb{R}^2$:
\begin{align*}
    \normif{\nabla  u_{\epsilon}} &\lesssim \left( \normb{ u_{\epsilon}} + \normif{ \omega_{\epsilon}} \left( 1+ \log_{+}  \frac{\norm{ \omega_{\epsilon}}_{H^2}}{\normif{ \omega_{\epsilon}}} \right)  \right),
\end{align*}
\eqref{est: L^2} and \eqref{est: W1,infty} give us
\begin{equation}\label{final estimate}
    \frac{d}{dt}X_\epsilon + \normh{\nabla  \rho_{\epsilon}}^2 + \normhh{\nabla  c_{\epsilon}}^2 \le C\left(1+\log(1+X_\epsilon) \right)X_\epsilon
\end{equation}
for  $C=C(T_{loc},T^*,\norm{\rho_0}_{H^m},\norm{c_0}_{H^{m+1}},\norm{u_0}_{H^{m+1}})$.
Using the Gr\"{o}nwall's inequality and noticing
\begin{equation*}
    X_\epsilon(0) \le C \left( \normh{\rho_{0}}^2 + \normhh{c_{0}}^2 + \normhh{u_{0}}^2\right),
\end{equation*} we conclude that $(\rho_{\epsilon},c_{\epsilon},u_{\epsilon})$ and $(\nabla\rho_{\epsilon},\nabla c_{\epsilon})$ are uniformly-in-$\epsilon$ bounded in 
\begin{equation*}
    C \left ([0,T^{*}]; H^{m} \times H^{m+1} \times H^{m+1}\right) \quad \text{and} \quad L^2 \left ([0,T^{*}]; H^{m} \times H^{m+1} \right),
\end{equation*}
respectively.
Now proceeding in the same way as Step 1 in the previous section, we can show the existence of a convergent subsequence $(\rho_{\epsilon_{n}},c_{\epsilon_{n}},u_{\epsilon_{n}})$ whose limit $(\rho,c,u)$ as $\epsilon_n\rightarrow 0$ is a solution of \eqref{KS-E} with \eqref{intro} satisfying
\begin{equation*}
    (\rho,c,u)\in C \left ([0,T^{*}]; H^{m} \times H^{m+1} \times H^{m+1}\right), \quad (\nabla \rho,\nabla c) \in L^2 \left ([0,T^{*}]; H^{m} \times H^{m+1} \right).
\end{equation*}
Since $T^*>0$ is arbitary, we have shown the global existence of solutions.
In order to establish the uniqueness of the solution, we can employ the same argument as in Step 2 from the previous section.
This completes the proof of Theorem \ref{thm: gwp}. $\Box$

\subsection*{Acknowledgments}{The author was supported by the Samsung Science and Technology Foundation under Project Number SSTF-BA2002-04. He thanks In-Jee Jeong for educational discussions and comments.}

\bibliographystyle{amsplain}
\bibliography{Keller-Segel-fluid}

\end{document}